\documentclass[pdflatex,sn-mathphys-num]{sn-jnl}

\usepackage{graphicx}%
\usepackage{multirow}%
\usepackage{amsmath,amssymb,amsfonts}%
\usepackage{amsthm}%
\usepackage{mathrsfs}%
\usepackage[title]{appendix}%
\usepackage{xcolor}%
\usepackage{textcomp}%
\usepackage{manyfoot}%
\usepackage{booktabs}%
\usepackage{algorithm}%
\usepackage{algorithmicx}%
\usepackage{algpseudocode}%
\usepackage{listings}%
\usepackage{latexsym, mathtools}
\usepackage{bm}

\theoremstyle{thmstyleone}%
\newtheorem{theorem}{Theorem}[section]
\newtheorem{proposition}[theorem]{Proposition}%

\theoremstyle{thmstyletwo}%
\newtheorem{problem}{Problem}[section]

\newtheorem{lemma}{Lemma}[section]

\theoremstyle{thmstylethree}%

\allowdisplaybreaks

\newcommand{\argmin}{\operatornamewithlimits{argmin}}

\begin{document}

\title[Article Title]{Mini-Batch Stochastic Halpern Algorithm for Nonexpansive Fixed Point Problems}

\author[1]{\fnm{Hideaki} \sur{Iiduka}}\email{iiduka@cs.meiji.ac.jp}

\affil*[1]{\orgdiv{Department of Computer Science}, \orgname{Meiji University}, \orgaddress{\street{1-1-1 Higashimita, Tama-ku, Kawasaki-shi}, \city{Kanagawa}, \postcode{2148571}, \country{Japan}}}

\abstract{
The Halpern algorithm is a powerful tool for finding the point in the fixed point set of a nonexpansive mapping that is closest to some given point. However, the expense of computing a nonexpansive mapping means that the algorithm may not be practical for all large-scale fixed point problems. To address this issue, a mini-batch stochastic Halpern algorithm was developed and a convergence analysis was performed to demonstrate that the algorithm with a decreasing step size and increasing batch size achieves mean-square convergence. Additionally, a convergence rate analysis was conducted to demonstrate that the convergence speed depends on the sequence of step sizes.
Furthermore, numerical experiments demonstrated the effectiveness of the proposed mini-batch stochastic Halpern algorithm.}

\keywords{Halpern algorithm, Mini-batch stochastic Halpern algorithm, Nonexpansive fixed point problem}

\maketitle

\section{Introduction}\label{sec:1}
\subsection{Fixed point approximation methods}\label{sec:1.1}
The nonexpansive fixed point problem \cite[Chapter 4]{goebel1}, \cite[Chapter 1]{goebel2}, \cite[Chapter 4]{b-c}, \cite[Chapter 3]{takahashi} to find the fixed points of a nonexpansive mapping is central among fixed point problems, comprising the convex feasibility problem \cite{bau}, monotone variational inequality problems \cite[Chapter III]{kind}, and convex minimization problems \cite[Chapter II]{hir}.

A well-known fixed point approximation method for finding a fixed point of a nonexpansive mapping $T$, that is $\bm{x}$ such that $\bm{x} = T (\bm{x})$, is the Krasnosel'ski\u\i-Mann algorithm \cite{kra,mann}. If we denote the $k$-th approximate fixed point as $\bm{x}_k \in \mathbb{R}^d$, then the Krasnosel'ski\u\i-Mann algorithm updates its iterates in direction $\bm{d}_k^{\mathrm{KM}} = T(\bm{x}_k) - \bm{x}_k$ using step size $\alpha_k \in (0,1)$ as follows:
\begin{align}\label{K_M}
\bm{x}_{k+1} 
= \bm{x}_k + \alpha_k \bm{d}_k^{\mathrm{KM}} 
= \bm{x}_k + \alpha_k (T(\bm{x}_k) - \bm{x}_k)
= (1 - \alpha_k) \bm{x}_k + \alpha_k T (\bm{x}_k).
\end{align}
Convergence of \eqref{K_M} to a fixed point of $T$ is guaranteed when $\alpha_k$ satisfies $\sum_{k=0}^{+ \infty} \alpha_k (1 - \alpha_k) = + \infty$ \cite[Theorem 5.15]{b-c}, \cite[Corollaries 1-3]{GROETSCH1972369}. It has also been shown that the algorithm achieves $\| \bm{x}_K - T (\bm{x}_K) \| = O(1/\sqrt{\sum_{k=0}^{K-1} \alpha_k (1 - \alpha_k)})$ \cite[Theorem 1]{Cominetti:2014aa}.

In contrast to the Krasnosel'ski\u\i-Mann algorithm \eqref{K_M}, which can find some fixed point of a nonexpansive mapping $T$, the Halpern algorithm \cite{halpern} finds specifically the global solution to the following convex minimization over fixed point set $\mathrm{Fix}(T) \coloneqq \{ \bm{x} \in \mathbb{R}^d \colon T(\bm{x}) = \bm{x} \}$:
\begin{align}\label{convex_0}
\text{Minimize } f_0 (\bm{x}) \coloneqq \frac{1}{2} \|\bm{x} - \bm{x}_0\|^2 
\text{ subject to } \bm{x} \in \mathrm{Fix}(T).
\end{align}
The Halpern algorithm uses direction $\bm{d}_k^{\mathrm{H}} = - \nabla f_0 (T(\bm{x}_k)) = \bm{x}_0 - T(\bm{x}_k)$ to update its iterates using step size $\alpha_k \in (0,1)$ as follows:
\begin{align}\label{H}
\bm{x}_{k+1} 
= T(\bm{x}_k) + \alpha_k \bm{d}_k^{\mathrm{H}} 
= T(\bm{x}_k) + \alpha_k (\bm{x}_0 - T(\bm{x}_k))
= \alpha_k \bm{x}_0 + (1 - \alpha_k) T (\bm{x}_k).
\end{align}
In the case of Halpern algorithm \eqref{H}, convergence to a global minimizer of $f_0$ over $\mathrm{Fix}(T)$, denoted $\bm{x}^\star$, is guaranteed when $\alpha_k$ satisfies $\lim_{k \to + \infty} \alpha_k = 0$, $\sum_{k=0}^{+ \infty} \alpha_k = + \infty$, and $\sum_{k=0}^{+ \infty} |\alpha_{k+1} - \alpha_k| < + \infty$ \cite[Theorem 2]{wit}. Solution $\bm{x}^\star$ of \eqref{convex_0} is the point in fixed point set $\mathrm{Fix}(T)$ that is closest to the point $\bm{x}_0$, called the initial point. In terms of the metric projection $P_{\mathrm{Fix}(T)}$ onto $\mathrm{Fix}(T)$, this means that $\bm{x}^\star = P_{\mathrm{Fix}(T)} (\bm{x}_0)$.

\subsection{Mini-batch stochastic fixed point approximation methods}
\label{sec:1.2}
\subsubsection{Computational difficulty for nonexpansive mapping}\label{sec:1.2.1}
As described in the previous section, the Krasnosel'ski\u\i-Mann and Halpern algorithms can be applied to nonexpansive mappings to find fixed points. However, these algorithms are not necessarily practical for all nonexpansive fixed point problems, due to their computations strongly depending on the computability of $T$.

As an example, we consider finding a fixed point for the mean of $n$ nonexpansive mappings $T_i \colon \mathbb{R}^d \to \mathbb{R}^d$ $(i=1,2,\cdots,n)$ on a $d$-dimensional Euclidean space,
\begin{align}\label{Nonexp_T}
T = \mathbb{E}_{\xi \sim \mathrm{DU}(n)}[T_\xi] = \frac{1}{n} \sum_{i=1}^n T_i,
\end{align}
where $\xi \sim \mathrm{DU}(n)$ means a random variable $\xi$ following the discrete uniform distribution on $\{1,2,\cdots,n\}$ and $\mathbb{E}_\xi [X]$ is the usual expectation of $X$ with respect to $\xi$. Then the computation of $T$ will be expensive when $n$ and $d$ are large, and existing methods \eqref{K_M} and \eqref{H} may not function properly, depending on the computational environment.

\subsubsection{Mini-batch stochastic mapping}\label{sec:1.2.2}
To address the issue of the computational difficulty for nonexpansive mapping $T = (1/n) \sum_{i=1}^n T_i$, let $b_k$ be the (mini-)batch size such that $b_k$ nonexpansive mappings are randomly chosen as samples from $\{T_1, T_2, \cdots, T_n \}$ in iteration $k$. Further, let $\xi_{k,i}$ be the random variable generated by the $i$-th sampling in iteration $k$ and $\bm{\xi}_k = (\xi_{k,1}, \xi_{k,2}, \cdots, \xi_{k,b_k})^\top$ comprise $b_k$ independent and identically distributed (i.i.d.) variables. Then the nonexpansive mapping for the $i$-th sampling in iteration $k$ can be written as $T_{\xi_{k,i}}$ and the {\em mini-batch stochastic mapping} that is the mean of $T_{\xi_{k,1}}, T_{\xi_{k,2}}, \cdots, T_{\xi_{k,b_k}}$ can be defined as follows for all $\bm{x} \in \mathbb{R}^d$:
\begin{align}\label{stochastic_mapping}
T_{\bm{\xi}_k}(\bm{x}) \coloneqq 
\frac{1}{b_k} \sum_{i=1}^{b_k} T_{\xi_{k,i}} (\bm{x}).
\end{align} 
If batch size $b_k$ is properly chosen, then $T_{\bm{\xi}_k}(\bm{x})$ is computationally feasible, despite still being dependent  on the computation environment. If $T_{\xi_{k,i}}$ is an unbiased estimator of $T$, that is $\mathbb{E}_{\xi_{k,i}}[T_{\xi_{k,i}}(\bm{x})] = T(\bm{x})$ for all $\bm{x}$, that are independent of $\xi_{k,i}$, then 
\begin{align*}
\mathbb{E}_{\bm{\xi}_k}[T_{\bm{\xi}_k}(\bm{x})]
= \frac{1}{b_k} \sum_{i=1}^{b_k} \mathbb{E}_{\xi_{k,i}} [T_{\xi_{k,i}} (\bm{x})]
= T(\bm{x}).
\end{align*}
From this, methods using \eqref{stochastic_mapping} can be expected to converge to fixed points of $T$. 

\subsubsection{Mini-batch stochastic Krasnosel'ski\u\i-Mann algorithm}\label{sec:1.2.3}
A mini-batch stochastic Krasnosel'ski\u\i-Mann algorithm \cite{minibatchKM} can be obtained by replacing term $T(\bm{x}_k)$ in \eqref{K_M} by term $T_{\bm{\xi}_k}(\bm{x}_k)$ defined in \eqref{stochastic_mapping} as follows:
\begin{align}\label{K_M_1}
\bm{x}_{k+1} 
= \bm{x}_k + \alpha_k (T_{\bm{\xi}_k} (\bm{x}_k) - \bm{x}_k)
= (1 - \alpha_k) \bm{x}_k + \alpha_k T_{\bm{\xi}_k} (\bm{x}_k).
\end{align}
It was previously proved in \cite{minibatchKM} that mini-batch stochastic Krasnosel'ski\u\i-Mann algorithm \eqref{K_M_1} converges almost surely to a fixed point of $T$ for $\alpha_k$ and $b_k$ satisfying  
\begin{align}\label{conditions_1_1}
\sum_{k=0}^{+ \infty} \alpha_k (1 - \alpha_k) = + \infty 
\text{ and } 
\sum_{k=0}^{+ \infty} \frac{1}{\sqrt{b_k}} < + \infty. 
\end{align}
It was further proved that $\min_{k \in [0:K-1]} \mathbb{E}[\| \bm{x}_k - T (\bm{x}_k) \|] = O(1/\sqrt{\sum_{k=0}^{K-1} \alpha_k (1 - \alpha_k)})$ \cite[Theorem 2]{minibatchKM}. The condition on $b_k$ in \eqref{conditions_1_1} implies that an increasing batch size is needed to guarantee almost-sure convergence of algorithm \eqref{K_M_1}.  In the case of sampling with replacement, $T_{\bm{\xi}_k} \neq T$ holds in general even when $b_k > n$. Hence, under sampling with replacement, we can use $b_k \to + \infty$ $(k \to + \infty)$ to examine the convergence of mini-batch algorithms.

For strictly increasing batch size $b_k$, there exists some $k_0 \in \mathbb{N}$ such that, for all $k \geq k_0$, $b_k \geq n$. Then it would be difficult in practice to compute $T_{\bm{\xi}_k} (\bm{x}_k) = (1/b_k) \sum_{i=1}^{b_k} T_{\xi_{k,i}} (\bm{x}_k)$ for $k \geq k_0$. However, \cite{umeda2025increasing,oowada2025faster} showed numerically that increasing the batch size a finite number of times (for example, to train ResNet-34 on ImageNet with $n = 1,281,167$, batch sizes $b_0 = 2^5, b_1 = 2^6, b_2 = 2^7$, $b_3 = 2^8$, and $b_4 = 2^9$ were used; see Section A.5 in \cite{umeda2025increasing}) accelerates mini-batch stochastic gradient descent, which is an example application of algorithm \eqref{K_M_1} (see also Section \ref{sec:1.3}). In practice,  finitely increasing batch size within some computable range will give faster convergence of algorithm \eqref{K_M_1}.

\subsection{Motivation}\label{sec:1.3}
\subsubsection{Practical benefits}\label{sec:1.3.1}
The first motivation of this paper is the practical benefit that methods based on the mini-batch stochastic mapping $T_{\bm{\xi}_k}$ defined in \eqref{stochastic_mapping} can be applied to the nonexpansive fixed point problem for the mapping $T$ defined in \eqref{Nonexp_T}, including large-scale real-world problems.

To make this motivation explicit, we investigate empirical risk minimization (ERM) problems that commonly arise in machine learning (see also discussions in \eqref{convex_t} and \eqref{convex_min}).
Let $\bm{x} \in \mathbb{R}^d$ be a parameter of a deep neural network, 
let $S = \{ (\bm{z}_1, \bm{y}_1), \ldots, (\bm{z}_n, \bm{y}_n)\}$ be the training set, where data point $\bm{z}_i$ is associated with label $\bm{y}_i$, and 
let $f_i (\bm{x}) \coloneqq f (\bm{x} ; (\bm{z}_i, \bm{y}_i))$ be the differentiable loss function corresponding to the $i$-th labeled training data $(\bm{z}_i, \bm{y}_i)$.
ERM problem is to minimize the
empirical loss defined for all $\bm{x} \in \mathbb{R}^d$ by 
$f (\bm{x}) \coloneqq (1/n) \sum_{i=1}^n f_i (\bm{x})$.
The ERM problem can be expressed as a fixed point problem for $T \coloneqq \mathrm{Id} - \eta \nabla f$ (see also \eqref{convex_t} and \eqref{convex_min}), where $\mathrm{Id}$ is the identity mapping on $\mathbb{R}^d$ and $\eta > 0$.
The simplest optimization method for the ERM problem is gradient descent (GD) defined for all $k \in \mathbb{N}$ by $\bm{x}_{k+1} = \bm{x}_k - \eta \nabla f (\bm{x}_k)$.
However, GD cannot be applied to the ERM problem, since the computation of $\nabla f$ is very expensive (see also Section \ref{sec:1.2.1}).
Meanwhile, mini-batch stochastic gradient descent (SGD) defined for all $k \in \mathbb{N}$ by 
\begin{align}\label{SGD_0}
\begin{split}
\bm{x}_{k+1}
&= T_{\bm{\xi}_k} (\bm{x}_k)
= \frac{1}{b_k} \sum_{i=1}^{b_k} T_{\xi_{k,i}} (\bm{x}_k) 
\coloneqq
\frac{1}{b_k} \sum_{i=1}^{b_k} 
\underbrace{\left( \mathrm{Id} - \eta \nabla f_{\xi_{k,i}} \right)}_{T_{\xi_{k,i}}} (\bm{x}_k)\\
&=
\bm{x}_k - \frac{\eta}{b_k} \sum_{i=1}^{b_k} \nabla f_{\xi_{k,i}} (\bm{x}_k) 
\eqqcolon
\bm{x}_k - \eta \nabla f_{\bm{\xi}_k} (\bm{x}_k)
\end{split}
\end{align}
can be applied to the ERM problem when batch size $b_k$ is properly chosen (see also Section \ref{sec:1.2.2}).
This implies that the mini-batch stochastic Krasnosel'ski\u\i-Mann algorithm \eqref{K_M_1} (i.e., $\bm{x}_{k+1} = (1-\alpha_k) \bm{x}_k + \alpha_k T_{\bm{\xi}_k} (\bm{x}_k)$) can be applied to the ERM problem.

\subsubsection{Mini-batch stochastic Halpern algorithm}\label{sec:1.3.2}
Section \ref{sec:1.2.3} indicates that the previous paper \cite{minibatchKM} demonstrated convergence of the mini-batch stochastic Krasnosel'ski\u\i-Mann algorithm \eqref{K_M_1}.
Meanwhile, it would be insufficient to investigate the mini-batch stochastic Halpern algorithm that replaces the $T(\bm{x}_k)$ term in \eqref{H} by $T_{\bm{\xi}_k}(\bm{x}_k)$, defined in \eqref{stochastic_mapping}:
\begin{align}\label{H_1}
\bm{x}_{k+1} 
= T_{\bm{\xi}_k} (\bm{x}_k) + \alpha_k (\bm{x}_0 - T_{\bm{\xi}_k} (\bm{x}_k))
= \alpha_k \bm{x}_0 + (1 - \alpha_k) T_{\bm{\xi}_k} (\bm{x}_k).
\end{align}
As indicated in Section \ref{sec:1.1}, the Halpern algorithm \eqref{H} can find the point $\bm{x}^\star$ in the fixed point set $\mathrm{Fix}(T)$ that is closest to the initial point $\bm{x}_0$.
However, the Halpern algorithm \eqref{H} using the mapping $T$ cannot be applied to practical problems, such as ERM problems (see Section \ref{sec:1.3.1}).
Moreover, although the mini-batch stochastic Krasnosel'ski\u\i-Mann algorithm \eqref{K_M_1} can be applied to ERM problems, it is not guaranteed that the algorithm \eqref{K_M_1} can find the point $\bm{x}^\star$.
The second motivation behind this work is thus to show that
the mini-batch stochastic Halpern algorithm \eqref{H_1} can solve the convex minimization problem over fixed point set \eqref{convex_0}.  

\subsection{Contribution}\label{sec:1.4}
\subsubsection{Convergence of mini-batch stochastic Halpern algorithm}
The main contribution of the paper is to demonstrate convergence analysis (Theorem \ref{thm:1}) and convergence rate analysis (Theorem \ref{thm:2}) of the mini-batch stochastic Halpern algorithm \eqref{H_1}.
Herein, three lemmas (Lemmas \ref{lem:1}, \ref{lem:2}, and \ref{lem:3}) are used to prove that if $(\alpha_k)$ and $(b_k)$ satisfy
\begin{align}\label{condition_1_0}
\begin{split}
&\lim_{k \to + \infty} \alpha_k = 0, \text{ } 
\sum_{k=0}^{+\infty} \alpha_k = + \infty, \text{ }
\sum_{k=0}^{+\infty} | \alpha_{k+1} - \alpha_k| < + \infty,\\
&\frac{1}{b_k} \leq \alpha_k^2 \text{ } (k \in \{0\} \cup \mathbb{N}), \text{ and } 
\sum_{k=0}^{+ \infty} \frac{1}{\sqrt{b_k}} < + \infty,
\end{split}
\end{align}
then algorithm \eqref{H_1} converges in mean square to $\bm{x}^\star = P_{\mathrm{Fix}(T)}(\bm{x}_0)$ (Theorem \ref{thm:1}). The $b_k$ condition in \eqref{condition_1_0} implies that an increasing batch size is needed to guarantee the mean-square convergence of algorithm \eqref{H_1}. The same condition to guarantee the almost-sure convergence of algorithm \eqref{K_M_1} was given as \eqref{conditions_1_1}.

It will also be shown that, under certain additional conditions, the convergence rate of algorithm \eqref{H_1} is given by the following (Theorem \ref{thm:2}): 
\begin{align*}
\min_{k \in [0:K-1]} \mathbb{E} \left[ f_0 (\bm{x}_k) \right]
= 
f_0^\star 
+
\begin{dcases}
O \left( \frac{\sum_{k=0}^{K-1} \alpha_k^2}{\sum_{k=0}^{K-1} \alpha_k} \right) 
&\text{ } \left(  \sum_{k=0}^{K-1} \alpha_k^2 = o \left( \sum_{k=0}^{K-1} \alpha_k  \right) \right)\\
O \left( \frac{1}{\sum_{k=0}^{K-1} \alpha_k} \right)
&\text{ } \left( \sum_{k=0}^{+ \infty} \alpha_k^2 < + \infty \right), 
\end{dcases}
\end{align*}
where $f_0^\star = f_0 (\bm{x}^\star)$ is the optimal value of $f_0$ over $\mathrm{Fix}(T)$. For example, with decreasing step size $\alpha_k = 1/\sqrt{k+1}$ and some increasing sequence of batch sizes $b_k$, this becomes 
\begin{align*}
\min_{k \in [0:K-1]} \mathbb{E} \left[ f_0 (\bm{x}_k) \right]
= 
f_0^\star 
+
O \left( \frac{\log K}{\sqrt{K}} \right).
\end{align*} 
This $O ( \log K/\sqrt{K})$ convergence of algorithm \eqref{H_1} for this example is competitive with those of existing methods (see, e.g., \cite[Chapter 2]{hazan2022}).

\subsubsection{Numerical results}
To demonstrate the effectiveness of the proposed mini-batch stochastic Halpern algorithm, we conduct experiments on the CIFAR-100 dataset \cite{krizhevsky2009learning} using a residual neural network \cite{he2016} (Section \ref{sec:4})
and compare the mini-batch stochastic Krasnosel'ski\u\i--Mann algorithm \eqref{K_M_1} with the mini-batch stochastic Halpern algorithm \eqref{H_1} in terms of their performance on the ERM problem (see also Section \ref{sec:1.3.1}).
Our numerical results demonstrate that the mini-batch stochastic Halpern algorithm \eqref{H_1} with $(\alpha_k)$ and $(b_k)$ satisfying \eqref{condition_1_0} achieves higher test accuracy than the mini-batch stochastic Krasnosel'ski\u\i--Mann algorithm \eqref{K_M_1}.

\subsection*{Notation and definitions}
Let $\mathbb{N}$ be the set of natural numbers and define $[n] \coloneqq \{1,2,\cdots, n\}$ and $[0:n] \coloneqq \{0,1,\cdots, n\}$ for $n \in \mathbb{N}$. Further, $\mathbb{R}^d$ denotes the usually $d$-dimensional Euclidean space with inner product $\langle \cdot, \cdot \rangle$ and norm $\|\cdot\|$ and we define $\mathbb{R}_+^d \coloneqq \{ \bm{x} = (x_i)_{i=1}^d \in \mathbb{R}^d \colon x_i \geq 0 \text{ } (i\in [n]) \}$. Let $\mathrm{Id}$ (i.e., $\mathrm{Id}(\bm{x}) \coloneqq \bm{x}$ for all $\bm{x} \in \mathbb{R}^d$) denote the identity mapping on $\mathbb{R}^d$, and $B_r (\bm{x}) \coloneqq \{ \bm{y} \in \mathbb{R}^d \colon \|\bm{y} - \bm{x} \| \leq r \}$ denote the closed ball with center $\bm{x} \in \mathbb{R}^d$ and radius $r > 0$.

For a mapping $T \colon \mathbb{R}^d \to \mathbb{R}^d$, we define the fixed point set of $T$ as $\mathrm{Fix} (T) \coloneqq \{ \bm{x} \in \mathbb{R}^d \colon \bm{x} = T(\bm{x}) \}$. A mapping $T$ is nonexpansive if $\| T(\bm{x}) - T(\bm{y})\| \leq \|\bm{x} - \bm{y}\|$ for all $\bm{x}, \bm{y} \in \mathbb{R}^d$. The fixed point set of such a nonexpansive mapping has been shown to be closed and convex \cite[Proposition 5.3]{goebel1}. A mapping $T$ is firmly nonexpansive if $\| T(\bm{x}) - T(\bm{y})\|^2 \leq \langle T(\bm{x}) - T (\bm{y}), \bm{x} - \bm{y}\rangle$ for all $\bm{x}, \bm{y} \in \mathbb{R}^d$. Since the Cauchy-Schwarz inequality means that a firmly nonexpansive mapping $T$ satisfies $\| T(\bm{x}) - T(\bm{y})\|^2 \leq \| T(\bm{x}) - T (\bm{y}) \| \| \bm{x} - \bm{y} \|$, firm nonexpansivity guarantees  nonexpansivity. The metric projection $P_C$ onto a nonempty, closed convex set $C$ $(\subset \mathbb{R}^d)$ is defined for all $\bm{x} \in \mathbb{R}^d$ by  $P_C (\bm{x}) \in C$ and $\| \bm{x} - P_C (\bm{x}) \| = \inf_{\bm{y} \in \mathbb{R}^d} \| \bm{x} - \bm{y} \|$. Projection $P_C$ is firmly nonexpansive with fixed point set $\mathrm{Fix}(P_C) = C$ \cite[Theorem 3.1.4(i)]{takahashi}, \cite[p.371]{bau}.

Let $\mathrm{P}(A)$ denote the probability of event $A$ and $\mathbb{E}_\xi [\bm{X}(\xi)]$ denote the expectation of a random variable $\bm{X}(\xi)$ with respect to the random variable $\xi$. Then the variance of $\bm{X}(\xi)$ with respect to $\xi$ is can be expressed as $\mathbb{V}_\xi [\bm{X}(\xi)] \coloneqq \mathbb{E}_\xi [\| \bm{X}(\xi) - \mathbb{E}_\xi [\bm{X}(\xi)] \|^2]$. Further, let $\mathbb{E}_\xi [\bm{X}(\xi)|\bm{Y}]$ and $\mathbb{V}_\xi [\bm{X}(\xi)|\bm{Y}]$ denote respectively the expectation and variance of $\bm{X}(\xi)$ conditioned on $\bm{Y}$. Given independent $\bm{\xi}_0, \bm{\xi}_1, \cdots, \bm{\xi}_k$, the total expectation is defined as $\mathbb{E} \coloneqq \mathbb{E}_{\bm{\xi}_0} \mathbb{E}_{\bm{\xi}_1} \cdots \mathbb{E}_{\bm{\xi}_k}$. Finally, let $\xi \sim \mathrm{DU}(n)$ denote that $\xi$ follows the discrete uniform distribution on $[n]$.

\section{Stochastic Nonexpansive Fixed Point Problem}
\label{sec:2}
Let us consider the following problem \cite[Problem 1.1 and Assumption 2.1]{minibatchKM}.

\begin{problem}\label{prob:1}
Let $n \in \mathbb{N}$ and $T_i \colon \mathbb{R}^d \to \mathbb{R}^d$ ($i\in [n]$). Furthermore, let $\xi$ be a random variable taking values in $[n]$. A stochastic mapping $T_\xi \colon \mathbb{R}^d \to \mathbb{R}^d$ is randomly chosen in $\{ T_i \}_{i=1}^{n}$ and a mapping $T \colon \mathbb{R}^d \to \mathbb{R}^d$ is defined for all $\bm{x} \in \mathbb{R}^d$ by 
\begin{align}\label{true_t}
T (\bm{x}) \coloneqq \mathbb{E}_\xi [T_\xi (\bm{x})],
\end{align}
where $\xi$ is independent of $\bm{x}$. We assume that:
\begin{enumerate}
\item[(A1)] [Nonexpansivity] $T_i$ $(i\in [n])$ is nonexpansive, i.e., $\|T_i (\bm{x}) - T_i (\bm{y})\| \leq \|\bm{x} - \bm{y}\|$ $(\bm{x}, \bm{y} \in \mathbb{R}^d)$;
\item[(A2)] [Boundedness of variance of stochastic mapping] There exists $\sigma \geq 0$ such that, for all $\bm{x} \in \mathbb{R}^{d}$, 
\begin{align*}
\mathbb{V}_{\xi}[T_{\xi}(\bm{x})] \coloneqq \mathbb{E}_{\xi} \big[\| T_{\xi}(\bm{x}) - \underbrace{\mathbb{E}_{\xi}[T_{\xi}(\bm{x})]}_{T (\bm{x})} \|^2 \big] \leq \sigma^{2};
\end{align*}
\item[(A3)] [Nonemptiness of fixed point set] $\mathrm{Fix}(T) \neq \emptyset$.
\end{enumerate}
Accordingly, we would like to find a fixed point $\bm{x}^\star$ of $T$, i.e., 
\begin{align*}
\bm{x}^\star = T (\bm{x}^\star) = \mathbb{E}_\xi [T_\xi (\bm{x}^\star)].
\end{align*} 
\end{problem}

\subsection{Discussion of assumptions with examples}
Assumption (A1) guarantees that $T$ defined by \eqref{true_t} is nonexpansive. From definition \eqref{true_t} and the properties  of $\mathbb{E}_\xi$, the following relations hold:
\begin{align*}
\| T(\bm{x}) - T (\bm{y})  \|
&= 
\| \mathbb{E}_\xi [T_\xi (\bm{x}) ] - \mathbb{E}_\xi [T_\xi (\bm{y}) ] \|
= 
\| \mathbb{E}_\xi [T_\xi (\bm{x}) - T_\xi (\bm{y}) ] \|.
\end{align*}
These combined with Jensen's inequality and (A1) imply 
\begin{align}\label{nonexp_T}
\| T(\bm{x}) - T (\bm{y})  \|
\leq
\mathbb{E}_\xi [\| T_\xi (\bm{x}) - T_\xi (\bm{y}) \|]
\leq 
\| \bm{x} - \bm{y} \|.
\end{align}
Let us examine two examples of $T_\xi$ and $T$ satisfying Assumption (A2) \cite[Example 2.1]{minibatchKM}. As our first example, consider the convex feasible problem of taking the intersection $C$ of multiple closed convex sets $C_i$. For this example, the following mapping $T$ is used:
\begin{align}\label{cfp_t}
T (\bm{x})
= 
\mathbb{E}_{\xi \sim \mathrm{DU}(n)} [\underbrace{P_\xi (\bm{x})}_{T_\xi (\bm{x})}]
= \frac{1}{n} \sum_{i=1}^n P_i (\bm{x}),
\end{align}
where $\bm{x} \in \mathbb{R}^d$ and $T_i = P_i$ is the metric projection onto $C_i$. Then, for $C \coloneqq \bigcap_{i=1}^n C_i = \bigcap_{i=1}^n\mathrm{Fix}(T_i) \neq \emptyset$, it follows from Proposition 4.47 in \cite{b-c} and \eqref{cfp_t} that 
\begin{align}\label{convex_feasibility_p}
\mathrm{Fix}(T) 
= \mathrm{Fix} \left( \frac{1}{n} \sum_{i=1}^n P_i  \right)
= \mathrm{Fix} \left( \frac{1}{n} \sum_{i=1}^n T_i  \right)
= \bigcap_{i=1}^n \mathrm{Fix}(T_i)
= \bigcap_{i=1}^n C_i = C.
\end{align}
Further, from the firm nonexpansivity of $P_i$ $(i\in [n])$ and the triangle inequality, we know that, for all $\bm{x}^\star \in C$ and all $\bm{x} \in B_{r} (\bm{x}^\star)$,
\begin{align}\label{norm_p_i}
\|P_i (\bm{x})\| 
\leq 
\| P_i (\bm{x}) - P_i (\bm{x}^\star)  \| + \|\bm{x}^\star\|
\leq \|\bm{x} - \bm{x}^\star\| + \|\bm{x}^\star\| \leq r + \|\bm{x}^\star\|.
\end{align}
By applying \eqref{norm_p_i}, we can obtain that 
\begin{align*}
\mathbb{V}_{\xi \sim \mathrm{DU}(n)}[P_\xi (\bm{x})]
&\leq 
\mathbb{E}_{\xi \sim \mathrm{DU}(n)} \left[\| P_\xi (\bm{x})\|^2 \right]
= 
\sum_{i=1}^n \|P_i (\bm{x})\|^2 \mathrm{P}(\xi = i)
\leq  
\left( r + \|\bm{x}^\star\| \right)^{2}.
\end{align*}

As our second example, we consider the convex minimization problem of finding a global minimizer of function $f$ defined as the mean of multiple convex functions $f_i$, $f \coloneqq (1/n) \sum_{i=1}^n f_i$. Then we let $\nabla f_\xi \colon \mathbb{R}^d \to \mathbb{R}^d$ be the stochastic gradient of $f$ and assume that the following conditions hold:
\begin{enumerate}
\item[(C1)] [Unbiasedness of stochastic gradient] For all $\bm{x} \in \mathbb{R}^d$, $\mathbb{E}_{\xi \sim \mathrm{DU}(n)} [\nabla f_\xi (\bm{x})] = \nabla f (\bm{x})$;
\item[(C2)] [Boundedness of variance of stochastic gradient] There exists $\sigma_g \geq 0$ such that, for all $\bm{x} \in \mathbb{R}^{d}$, $\mathbb{V}_{\xi \sim \mathrm{DU}(n)}[\nabla f_{\xi}(\bm{x})] \leq \sigma_g^{2}$.
\end{enumerate} 
In this case, we define our mapping $T$ for $\bm{x} \in \mathbb{R}^d$ as
\begin{align}\label{convex_t}
T (\bm{x})
= 
\mathbb{E}_{\xi \sim \mathrm{DU}(n)} [\underbrace{(\mathrm{Id} - \eta \nabla f_\xi) (\bm{x})}_{T_\xi (\bm{x})}]
= \bm{x} - \eta \nabla f(\bm{x}),
\end{align}
where $\eta > 0$. Then it follows from \eqref{convex_t} and the convexity of $f$ that 
\begin{align}\label{convex_min}
\mathrm{Fix}(T) 
= \argmin_{\bm{x} \in \mathbb{R}^d} f(\bm{x})
= \argmin_{\bm{x} \in \mathbb{R}^d} \frac{1}{n} \sum_{i=1}^n f_i (\bm{x})
\neq 
\bigcap_{i=1}^n \argmin_{\bm{x} \in \mathbb{R}^d} f_i (\bm{x})
= 
\bigcap_{i=1}^n \mathrm{Fix}(T_i).
\end{align}
It should be noted here that $\mathrm{Fix}(T) = \argmin_{\bm{x} \in \mathbb{R}^d} f(\bm{x})$ in \eqref{convex_min} holds without assuming $\bigcap_{i=1}^n \mathrm{Fix}(T_i) = \bigcap_{i=1}^n \argmin_{\bm{x} \in \mathbb{R}^d} f_i (\bm{x}) \neq \emptyset$, which differs from the convex feasibility problem considered in \eqref{convex_feasibility_p}. From (C1) and (C2), it follows that 
\begin{align*}
\mathbb{V}_{\xi \sim \mathrm{DU}(n)}[\bm{x} - \eta \nabla f_\xi (\bm{x})]
&= 
\mathbb{E}_{\xi \sim \mathrm{DU}(n)}
[ \| ( \bm{x} - \eta \nabla f_\xi (\bm{x}) ) - ( \bm{x} - \eta \nabla f (\bm{x}) ) \|^{2} ]\\
&= 
\eta^{2}
\mathbb{E}_{\xi \sim \mathrm{DU}(n)} [ \|\nabla f_\xi (\bm{x}) - \nabla f (\bm{x})\|^{2}]\\
&=
\eta^{2} \mathbb{V}_{\xi \sim \mathrm{DU}(n)}
[\nabla f_\xi (\bm{x})] 
\leq 
(\eta \sigma_g)^{2}.
\end{align*}
It has previously been shown that if $\eta$ is suitably restricted, then $T_i \coloneqq \mathrm{Id} - \eta \nabla f_i$ is nonexpansive \cite[Proposition 2.3]{iiduka_JOTA}.

\subsection{Mini-batch stochastic mapping}
We define batch size $b \in \mathbb{N}$ as the number of samples $\bm{\xi} = (\xi_{1}, \xi_{2}, \cdots, \xi_{b})^\top$, which are i.i.d. variables. Then we define the {\em mini-batch stochastic mapping} of $T$ in \eqref{true_t} using the following as $T_{\bm{\xi}}$:
\begin{align}\label{mini_batch}
T_{\bm{\xi}}(\bm{x}) \coloneqq 
\frac{1}{b} \sum_{i=1}^b T_{\xi_{i}}(\bm{x}),
\end{align} 
where $\bm{x} \in \mathbb{R}^d$ is independent of $\bm{\xi}$. As demonstrated by the following proposition, mini-batch stochastic mapping $T_{\bm{\xi}}$ inherits useful properties of stochastic mapping $T_\xi$, such as being unbiased (\eqref{true_t} in Problem \ref{prob:1}) and having a bounded variance (Assumption (A2)). The proof of proposition \ref{prop:1} was given in \cite{minibatchKM}.

\begin{proposition}\label{prop:1}
For Problem \ref{prob:1} and the mini-batch stochastic mapping $T_{\bm{\xi}}$ defined by (\ref{mini_batch}), if $\bm{x}, \bm{y} \in \mathbb{R}^d$ are independent of $\bm{\xi}$, then the following properties hold.
\begin{enumerate}
\item[{\em (i)}] {\em [Unbiasedness of mini-batch stochastic mapping]} $\displaystyle{\mathbb{E}_{\bm{\xi}}[T_{\bm{\xi}}(\bm{x})] = T(\bm{x})}$;
\item[{\em (ii)}] {\em [Boundedness of variance of mini-batch stochastic mapping]} $\displaystyle{\mathbb{V}_{\bm{\xi}}[T_{\bm{\xi}} (\bm{x})] \leq \frac{\sigma^2}{b}}$.
\end{enumerate}
Moreover, {\em (i)} and {\em (ii)} together imply that 
\begin{align}\label{app_nonexp}
\mathbb{E}_{\bm{\xi}} \left[ \| T_{\bm{\xi}}(\bm{x}) - T(\bm{y}) \|^2 \right]
\leq 
\|\bm{x} - \bm{y} \|^2 
+ 
\frac{\sigma^2}{b}.
\end{align}
\end{proposition}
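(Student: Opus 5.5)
The plan is to argue directly from the definition \eqref{mini_batch}, using only that the samples $\xi_1,\dots,\xi_b$ are i.i.d., independent of $\bm{x}$ and $\bm{y}$, and each distributed like $\xi$ in Problem \ref{prob:1}.

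For (i), linearity of expectation gives $\mathbb{E}_{\bm{\xi}}[T_{\bm{\xi}}(\bm{x})] = \frac{1}{b}\sum_{i=1}^b \mathbb{E}_{\xi_i}[T_{\xi_i}(\bm{x})]$. Since $\bm{x}$ is independent of $\xi_i$ and $\xi_i$ has the same law as $\xi$, \eqref{true_t} gives $\mathbb{E}_{\xi_i}[T_{\xi_i}(\bm{x})] = T(\bm{x})$ for every $i$. Averaging $b$ copies of $T(\bm{x})$ yields (i).

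For (ii), set $\bm{Z}_i \coloneqq T_{\xi_i}(\bm{x}) - T(\bm{x})$, so that $T_{\bm{\xi}}(\bm{x}) - T(\bm{x}) = \frac{1}{b}\sum_i \bm{Z}_i$. Expanding the squared norm gives
\begin{align*}
\mathbb{V}_{\bm{\xi}}[T_{\bm{\xi}}(\bm{x})] = \frac{1}{b^2}\Bigl(\sum_{i=1}^b \mathbb{E}\|\bm{Z}_i\|^2 + \sum_{i\neq j}\mathbb{E}\langle \bm{Z}_i,\bm{Z}_j\rangle\Bigr).
\end{align*}
For $i \neq j$, independence of $\xi_i$ and $\xi_j$ factorizes the cross term as $\langle \mathbb{E}\bm{Z}_i, \mathbb{E}\bm{Z}_j\rangle$, which is $0$ because each $\bm{Z}_i$ has zero mean by the computation in (i). The diagonal terms are each at most $\sigma^2$ by (A2). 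Hence the variance is at most $b\sigma^2/b^2 = \sigma^2/b$. This cross-term cancellation is the only step where independence of the samples matters, so I expect it to be the main point requiring care: one has to make sure $\bm{x}$ is treated as deterministic with respect to $\bm{\xi}$, so that the factorization is legitimate.

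For \eqref{app_nonexp}, write $T_{\bm{\xi}}(\bm{x}) - T(\bm{y}) = (T_{\bm{\xi}}(\bm{x}) - T(\bm{x})) + (T(\bm{x}) - T(\bm{y}))$ and expand the square. The second summand is deterministic with respect to $\bm{\xi}$, so the expected cross term is $2\langle \mathbb{E}_{\bm{\xi}}[T_{\bm{\xi}}(\bm{x})] - T(\bm{x}), T(\bm{x}) - T(\bm{y})\rangle = 0$ by (i). The first squared term is bounded by $\sigma^2/b$ via (ii). The second squared term is bounded by $\|\bm{x}-\bm{y}\|^2$ via the nonexpansivity \eqref{nonexp_T} of $T$. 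Summing the three pieces gives \eqref{app_nonexp}.
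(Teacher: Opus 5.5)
Your argument is correct and is the standard one: linearity and identical distribution give (i), independence of the samples kills the cross terms in (ii), and the split $T_{\bm{\xi}}(\bm{x}) - T(\bm{y}) = (T_{\bm{\xi}}(\bm{x}) - T(\bm{x})) + (T(\bm{x}) - T(\bm{y}))$, combined with (i), (ii) and the nonexpansivity \eqref{nonexp_T} of $T$, gives \eqref{app_nonexp}; you rightly make explicit that nonexpansivity is needed in addition to (i) and (ii). The paper does not reprove the proposition but defers to \cite{minibatchKM}, and your bias--variance argument is essentially the approach used there.
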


Inequality \eqref{app_nonexp} of the proposition implies that if batch size $b$ is sufficiently large, then the following relations hold:
\begin{align*}
&\mathbb{E}_{\bm{\xi}} \left[ \| T_{\bm{\xi}}(\bm{x}) - T(\bm{x}) \|^2 \right]
\leq 
\|\bm{x} - \bm{x} \|^2 
+ 
\frac{\sigma^2}{b}
\approx 
0,\\
&\mathbb{E}_{\bm{\xi}} \left[ \| T_{\bm{\xi}}(\bm{x}) - T(\bm{y}) \|^2 \right]
\leq 
\|\bm{x} - \bm{y} \|^2 
+ 
\frac{\sigma^2}{b}
\approx 
\|\bm{x} - \bm{y} \|^2,
\end{align*}
which imply that $T_{\bm{\xi}}$ with a sufficiently large $b$ well  approximates the nonexpansive mapping $T$ (where the nonexpansivity of $T$ follows from \eqref{nonexp_T}).

\section{Mini-Batch Stochastic Halpern Algorithm}
\label{sec:3}
As previously (Section \ref{sec:1.2}), $\xi_{k,i}$ is assumed to be the random variable generated by the $i$-th sampling in iteration $k$. Then $\bm{x}_k$ is independent of $\bm{\xi}_k$ from being computed before the sampling of $\bm{\xi}_k = (\xi_{k,1}, \xi_{k,2}, \cdots, \xi_{k,b_k})^\top$. It follows from \eqref{mini_batch} that the mini-batch stochastic mapping of $T$ at iteration $k$ can be defined as follows:
\begin{align}\label{mini_batch_t}
T_{\bm{\xi}_k} (\bm{x}) \coloneqq \frac{1}{b_k} \sum_{i=1}^{b_k} T_{\xi_{k,i}}(\bm{x}).
\end{align}
The pseudo-code for the mini-batch stochastic Halpern algorithm is given below as Algorithm 1.

\begin{algorithm} 
\caption{Mini-Batch Stochastic Halpern Algorithm} 
\label{algo:1} 
\begin{algorithmic}[1] 
\Require
$\bm{x}_{0} \in\mathbb{R}^d$ (initial point), $\alpha_k \in (0,1]$ (step size), $b_k \in \mathbb{N}$ (batch size), $K \in \mathbb{N}$ (steps).
\Ensure
$\bm{x}_{K}$
\For{$k = 0, 1, \cdots, K-1$}
\State
$\bm{\xi}_k = (\xi_{k,1}, \xi_{k,2}, \cdots, \xi_{k,b_k})^\top$
\State
$T_{\bm{\xi}_k}(\bm{x}_k) \coloneqq \frac{1}{b_k} \sum_{i=1}^{b_k} T_{\xi_{k,i}}(\bm{x}_k)$
\State
$\bm{x}_{k+1} \coloneqq 
T_{\bm{\xi}_k} (\bm{x}_k) + \alpha_k (\bm{x}_0 - T_{\bm{\xi}_k}(\bm{x}_k))
=
\alpha_k \bm{x}_0 + (1-\alpha_k) T_{\bm{\xi}_k}(\bm{x}_k)$
\State
$k \gets k + 1$
\EndFor
\end{algorithmic}
\end{algorithm}

In theory, we could assume sampling with replacement, in which case $T_{\bm{\xi}_k} \neq T$ holds even if batch size $b_k$ is greater than $n$, so in an examination of the convergence of mini-batch algorithms under sampling with replacement, we can use $b_k \to + \infty$ $(k \to + \infty)$.

\subsection{Convergence}
\label{sec:3.1}
First, we show that the sequence $(\bm{x}_k)$ is bounded in the sense of the expected squared norm.

\begin{lemma}\label{lem:1}
The sequence $(\bm{x}_k)$ generated by Algorithm \ref{algo:1} for Problem \ref{prob:1} satisfies, for all $k \in \{0\} \cup \mathbb{N}$ and all $\bm{x}^\star \in \mathrm{Fix}(T)$,
\begin{align*}
\mathbb{E}_{\bm{\xi}_k} \left[ \| \bm{x}_{k+1} - \bm{x}^\star \|^2 \Big| \bm{\xi}_{[k-1]} \right]
\leq 
(1 - \alpha_k) \| \bm{x}_{k} - \bm{x}^\star \|^2 
+ \alpha_k \|\bm{x}_0 - \bm{x}^\star \|^2
+ \frac{\sigma^2}{b_k},
\end{align*}
where $\bm{\xi}_{[k-1]} \coloneqq \{\bm{\xi}_0, \bm{\xi}_1, \cdots, \bm{\xi}_{k-1} \}$. This implies that, if
\begin{align}\label{conditions_b_a_1}
\frac{1}{b_k} \leq \alpha_k,
\end{align}
for all $k \in \{0\} \cup \mathbb{N}$, then there exists $M \geq \|\bm{x}_0 - \bm{x}^\star\|^2 + \sigma^2$ such that, for all $k \in \{0\} \cup \mathbb{N}$, 
\begin{align*}  
\mathbb{E} \left[ \|\bm{x}_k - \bm{x}^\star \|^2 \right] \leq M 
\text{ and }
\mathbb{E} \left[ \|T_{\bm{\xi}_k} (\bm{x}_k) - \bm{x}^\star \|^2 \right] \leq M + \sigma^2.
\end{align*} 
\end{lemma}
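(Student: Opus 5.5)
The plan is to write the update as a convex combination around $\bm{x}^\star$ and use convexity of $\|\cdot\|^2$. Since $\bm{x}^\star = \alpha_k \bm{x}^\star + (1-\alpha_k)\bm{x}^\star$, we have
\begin{align*}
\bm{x}_{k+1} - \bm{x}^\star = \alpha_k (\bm{x}_0 - \bm{x}^\star) + (1-\alpha_k)(T_{\bm{\xi}_k}(\bm{x}_k) - \bm{x}^\star).
\end{align*}
Convexity of $\|\cdot\|^2$ with $\alpha_k \in (0,1]$ then gives
\begin{align*}
\|\bm{x}_{k+1} - \bm{x}^\star\|^2 \leq \alpha_k \|\bm{x}_0 - \bm{x}^\star\|^2 + (1-\alpha_k)\|T_{\bm{\xi}_k}(\bm{x}_k) - \bm{x}^\star\|^2.
\end{align*}
Next I take $\mathbb{E}_{\bm{\xi}_k}[\cdot \,|\, \bm{\xi}_{[k-1]}]$. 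Conditionally on $\bm{\xi}_{[k-1]}$, $\bm{x}_k$ is deterministic and independent of $\bm{\xi}_k$. Also $\bm{x}^\star = T(\bm{x}^\star)$. So inequality \eqref{app_nonexp} of Proposition \ref{prop:1}, with $\bm{x} = \bm{x}_k$, $\bm{y} = \bm{x}^\star$ and $b = b_k$, gives
\begin{align*}
\mathbb{E}_{\bm{\xi}_k}\bigl[\|T_{\bm{\xi}_k}(\bm{x}_k) - \bm{x}^\star\|^2 \,\big|\, \bm{\xi}_{[k-1]}\bigr] \leq \|\bm{x}_k - \bm{x}^\star\|^2 + \frac{\sigma^2}{b_k}.
\end{align*}
Inserting this and using $(1-\alpha_k)\sigma^2/b_k \leq \sigma^2/b_k$ yields the first claimed inequality.

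For boundedness, I take total expectation and use \eqref{conditions_b_a_1}, i.e.\ $\sigma^2/b_k \leq \alpha_k \sigma^2$. With $e_k \coloneqq \mathbb{E}[\|\bm{x}_k - \bm{x}^\star\|^2]$ this gives
\begin{align*}
e_{k+1} \leq (1-\alpha_k) e_k + \alpha_k \bigl(\|\bm{x}_0 - \bm{x}^\star\|^2 + \sigma^2\bigr).
\end{align*}
Set $M \coloneqq \|\bm{x}_0 - \bm{x}^\star\|^2 + \sigma^2$. Induction closes: $e_0 = 0 \leq M$ (indeed $e_0 = \|\bm{x}_0 - \bm{x}^\star\|^2 \leq M$), and if $e_k \leq M$ then $e_{k+1} \leq (1-\alpha_k)M + \alpha_k M = M$. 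Any larger $M$ also works, which matches the statement's ``$M \geq$''.

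Finally, the bound on $\mathbb{E}[\|T_{\bm{\xi}_k}(\bm{x}_k) - \bm{x}^\star\|^2]$ follows from the conditional estimate above after total expectation: it is at most $e_k + \sigma^2/b_k \leq M + \sigma^2$, since $b_k \geq 1$.

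The only delicate point is the conditioning. Proposition \ref{prop:1} requires $\bm{x}$ to be independent of $\bm{\xi}$, so it must be applied conditionally on $\bm{\xi}_{[k-1]}$, where $\bm{x}_k$ is fixed. The tower property $\mathbb{E} = \mathbb{E}_{\bm{\xi}_0}\cdots\mathbb{E}_{\bm{\xi}_k}$ then passes the bound to total expectation. The rest is routine.
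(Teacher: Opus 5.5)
Your proposal is correct and follows essentially the same route as the paper: you apply convexity of $\|\cdot\|^2$ to the convex-combination form of $\bm{x}_{k+1} - \bm{x}^\star$, then use inequality \eqref{app_nonexp} conditionally on $\bm{\xi}_{[k-1]}$ with $\bm{y} = \bm{x}^\star = T(\bm{x}^\star)$, and finish with total expectation, $\sigma^2/b_k \leq \alpha_k \sigma^2$, and induction. The only blemish is the slip ``$e_0 = 0$'' in the base case, which should read $e_0 = \|\bm{x}_0 - \bm{x}^\star\|^2 \leq M$ as your parenthetical says; bounding $\sigma^2/b_k \leq \sigma^2$ via $b_k \geq 1$ instead of via $\sigma^2/b_k \leq \sigma^2\alpha_k \leq \sigma^2$ as the paper does is equally valid.
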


\begin{proof}
The convexity of $\| \cdot \|^2$ and the definition of $\bm{x}_{k+1}$ (Step 4 of Algorithm \ref{algo:1}) imply that
\begin{align*}
&\mathbb{E}_{\bm{\xi}_k} \left[ \| \bm{x}_{k+1} - \bm{x}^\star \|^2 \Big| \bm{\xi}_{[k-1]} \right]\\
&= 
\mathbb{E}_{\bm{\xi}_k} \left[ \| \alpha_k (\bm{x}_{0} - \bm{x}^\star)
+ (1-\alpha_k) (T_{\bm{\xi}_k} (\bm{x}_{k}) - \bm{x}^\star) \|^2 \Big| \bm{\xi}_{[k-1]} \right]\\
&\leq
\alpha_k \mathbb{E}_{\bm{\xi}_k} \left[ \| \bm{x}_{0} - \bm{x}^\star \|^2 \Big| \bm{\xi}_{[k-1]} \right]
+ 
(1 - \alpha_k) \mathbb{E}_{\bm{\xi}_k} \left[ \| T_{\bm{\xi}_k} (\bm{x}_{k}) - \bm{x}^\star \|^2 \Big| \bm{\xi}_{[k-1]} \right]. 
\end{align*}
Since Proposition \ref{prop:1} with $\bm{y} = \bm{x}^\star = T(\bm{x}^\star)$ ensures that
\begin{align}\label{ineq_prop}
\mathbb{E}_{\bm{\xi}_k} \left[ \| T_{\bm{\xi}_k} (\bm{x}_{k}) - \bm{x}^\star \|^2 \Big| \bm{\xi}_{[k-1]} \right]
\leq 
\| \bm{x}_{k} - \bm{x}^\star \|^2 + \frac{\sigma^2}{b_k},
\end{align} 
we have
\begin{align*}
\mathbb{E}_{\bm{\xi}_k} \left[ \| \bm{x}_{k+1} - \bm{x}^\star \|^2 \Big| \bm{\xi}_{[k-1]} \right]
&\leq 
\alpha_k  \| \bm{x}_{0} - \bm{x}^\star \|^2 
+ 
(1 - \alpha_k) \left( \| \bm{x}_{k} - \bm{x}^\star \|^2 + \frac{\sigma^2}{b_k} \right)\\
&\leq 
(1 - \alpha_k) \| \bm{x}_{k} - \bm{x}^\star \|^2 + \alpha_k \| \bm{x}_{0} - \bm{x}^\star \|^2 + \frac{\sigma^2}{b_k}.
\end{align*}
The total expectation $\mathbb{E} \coloneqq \mathbb{E}_{\bm{\xi}_0} \mathbb{E}_{\bm{\xi}_1} \cdots \mathbb{E}_{\bm{\xi}_{k-1}} \mathbb{E}_{\bm{\xi}_k}$ for the above inequality, together with the condition $1/ b_k \leq \alpha_k$, implies that 
\begin{align}\label{bdd_1}
\begin{split}
\mathbb{E} \left[ \| \bm{x}_{k+1} - \bm{x}^\star \|^2 \right]
&\leq 
(1 - \alpha_k) \mathbb{E} \left[ \| \bm{x}_{k} - \bm{x}^\star \|^2 \right] + \alpha_k \| \bm{x}_{0} - \bm{x}^\star \|^2 + \frac{\sigma^2}{b_k}\\
&\leq 
(1 - \alpha_k) \mathbb{E} \left[ \| \bm{x}_{k} - \bm{x}^\star \|^2 \right] + \alpha_k \left( \| \bm{x}_{0} - \bm{x}^\star \|^2 + \sigma^2 \right).
\end{split}
\end{align}
Here, we can set $M \geq 0$ such that $M \geq \| \bm{x}_{0} - \bm{x}^\star \|^2 + \sigma^2$ and then show by mathematical induction that $\mathbb{E}[ \| \bm{x}_{k+1} - \bm{x}^\star \|^2] \leq M$ holds for all $k \in \{0\} \cup \mathbb{N}$. Since \eqref{bdd_1} with $k = 0$ leads to the finding that 
\begin{align*}
\mathbb{E} \left[ \| \bm{x}_{1} - \bm{x}^\star \|^2 \right]
&\leq 
(1 - \alpha_0) \| \bm{x}_{0} - \bm{x}^\star \|^2 + \alpha_0 \left( \| \bm{x}_{0} - \bm{x}^\star \|^2 + \sigma^2 \right)\\
&\leq 
(1 - \alpha_0) M + \alpha_0 M = M. 
\end{align*}
We assume that $\mathbb{E}[ \| \bm{x}_{m} - \bm{x}^\star \|^2] \leq M$ holds for some $m \in \mathbb{N}$. Then, from \eqref{bdd_1} with $k = m$, we have
\begin{align*}
\mathbb{E} \left[ \| \bm{x}_{m+1} - \bm{x}^\star \|^2 \right]
&\leq 
(1 - \alpha_m) \mathbb{E} \left[ \| \bm{x}_{m} - \bm{x}^\star \|^2 \right] + \alpha_m \left( \| \bm{x}_{0} - \bm{x}^\star \|^2 + \sigma^2 \right)\\
&\leq 
(1 - \alpha_m) M + \alpha_m M = M,
\end{align*}
which implies that $\mathbb{E}[ \| \bm{x}_{k+1} - \bm{x}^\star \|^2] \leq M$ for all $k \in \{0\} \cup \mathbb{N}$. From \eqref{ineq_prop}, we have, for all $k \in \{0\} \cup \mathbb{N}$, 
\begin{align*}
\mathbb{E} \left[ \| T_{\bm{\xi}_k} (\bm{x}_{k}) - \bm{x}^\star \|^2  \right]
\leq 
\mathbb{E} \left[ \| \bm{x}_{k} - \bm{x}^\star \|^2 \right] + \frac{\sigma^2}{b_k}
\leq 
M + \sigma^2 \alpha_k 
\leq 
M + \sigma^2.
\end{align*}
This completes the proof.
\end{proof}

Next, we show that the distance between $\bm{x}_k$ and $\bm{x}_{k+1}$ converges to $0$ in the sense of the expected norm.

\begin{lemma}\label{lem:2}
The sequence $(\bm{x}_k)$ generated by Algorithm \ref{algo:1} under \eqref{conditions_b_a_1} for Problem \ref{prob:1} satisfies 
\begin{align*}
\mathbb{E} [ \| \bm{x}_{k+1} - \bm{x}_{k} \| ]
\leq
(1 - \alpha_k) \mathbb{E} \left[ \|\bm{x}_k - \bm{x}_{k-1} \| \right]
+ 
M_1 |\alpha_k - \alpha_{k-1}|
+
\sigma \left( \frac{1}{\sqrt{b_k}} + \frac{1}{\sqrt{b_{k-1}}} \right),
\end{align*}
for all $k \in \{0\} \cup \mathbb{N}$, where $M_1 \coloneqq \|\bm{x}_0 \| + \sqrt{2 (M+\|\bm{x}^\star\|^2 + \sigma^2)}$, which implies that, if 
\begin{align}\label{conditions_b_a_2}
\sum_{k=0}^{+ \infty} \alpha_k = + \infty, \text{ }
\sum_{k=0}^{+ \infty} | \alpha_{k+1} - \alpha_k | < + \infty, \text{ and }
\sum_{k=0}^{+ \infty} \frac{1}{\sqrt{b_k}} < + \infty,
\end{align}
then
\begin{align*}
\lim_{k \to + \infty} \mathbb{E} [\| \bm{x}_{k+1} - \bm{x}_k\| ] = 0.
\end{align*}
\end{lemma}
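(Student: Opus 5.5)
We write the difference of consecutive iterates from Step 4 of Algorithm \ref{algo:1}. For $k \geq 1$,
\begin{align*}
\bm{x}_{k+1} - \bm{x}_k
= (\alpha_k - \alpha_{k-1}) (\bm{x}_0 - T_{\bm{\xi}_{k-1}}(\bm{x}_{k-1}))
+ (1-\alpha_k) \bigl( T_{\bm{\xi}_k}(\bm{x}_k) - T_{\bm{\xi}_{k-1}}(\bm{x}_{k-1}) \bigr).
\end{align*}
To exploit nonexpansivity of the true mapping $T$, we insert $T(\bm{x}_k)$ and $T(\bm{x}_{k-1})$:
\begin{align*}
T_{\bm{\xi}_k}(\bm{x}_k) - T_{\bm{\xi}_{k-1}}(\bm{x}_{k-1})
= (T_{\bm{\xi}_k}(\bm{x}_k) - T(\bm{x}_k)) + (T(\bm{x}_k) - T(\bm{x}_{k-1})) + (T(\bm{x}_{k-1}) - T_{\bm{\xi}_{k-1}}(\bm{x}_{k-1})).
\end{align*}
The triangle inequality, \eqref{nonexp_T} and $1-\alpha_k \leq 1$ on the noise terms then give the pathwise bound
\begin{align*}
\|\bm{x}_{k+1} - \bm{x}_k\|
\leq{}& (1-\alpha_k)\|\bm{x}_k - \bm{x}_{k-1}\| + |\alpha_k - \alpha_{k-1}| \, \|\bm{x}_0 - T_{\bm{\xi}_{k-1}}(\bm{x}_{k-1})\| \\
&+ \|T_{\bm{\xi}_k}(\bm{x}_k) - T(\bm{x}_k)\| + \|T_{\bm{\xi}_{k-1}}(\bm{x}_{k-1}) - T(\bm{x}_{k-1})\|.
\end{align*}

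Next we take total expectations term by term. For the noise terms, $\bm{x}_k$ is independent of $\bm{\xi}_k$. Conditioning on $\bm{\xi}_{[k-1]}$, Jensen's inequality and Proposition \ref{prop:1}(ii) give $\mathbb{E}_{\bm{\xi}_k}[\|T_{\bm{\xi}_k}(\bm{x}_k) - T(\bm{x}_k)\| \mid \bm{\xi}_{[k-1]}] \leq \sigma/\sqrt{b_k}$, and similarly $\sigma/\sqrt{b_{k-1}}$ for the other term. For the middle term we bound
\begin{align*}
\|\bm{x}_0 - T_{\bm{\xi}_{k-1}}(\bm{x}_{k-1})\| \leq \|\bm{x}_0\| + \|T_{\bm{\xi}_{k-1}}(\bm{x}_{k-1})\|.
\end{align*}
Then $\|\bm{y}\|^2 \leq 2\|\bm{y} - \bm{x}^\star\|^2 + 2\|\bm{x}^\star\|^2$, Jensen's inequality, and Lemma \ref{lem:1} (which needs \eqref{conditions_b_a_1}) give
\begin{align*}
\mathbb{E}\|T_{\bm{\xi}_{k-1}}(\bm{x}_{k-1})\| \leq \sqrt{2(M+\sigma^2+\|\bm{x}^\star\|^2)}.
\end{align*}
This produces exactly $M_1$, yielding the claimed recursion. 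For $k=0$ we adopt the convention $\bm{x}_{-1} = \bm{x}_0$, $\alpha_{-1} = \alpha_0$, and the inequality is then trivial or handled the same way.

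For the limit, set $a_k \coloneqq \mathbb{E}\|\bm{x}_k - \bm{x}_{k-1}\|$ and $c_k \coloneqq M_1|\alpha_k - \alpha_{k-1}| + \sigma(b_k^{-1/2} + b_{k-1}^{-1/2})$. Then $a_{k+1} \leq (1-\alpha_k)a_k + c_k$, where $\sum_k c_k < +\infty$ by \eqref{conditions_b_a_2} and $\sum_k \alpha_k = +\infty$. We invoke the standard Xu/Liu-type lemma for such recursions: if $a_{k+1} \leq (1-\alpha_k)a_k + c_k$ with $\sum \alpha_k = \infty$ and $\sum c_k < \infty$, then $a_k \to 0$. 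Since it is not stated earlier in the paper, we would prove it briefly. Unrolling from an index $m$ gives
\begin{align*}
a_{K+1} \leq \prod_{k=m}^{K}(1-\alpha_k)\, a_m + \sum_{k \geq m} c_k.
\end{align*}
The product tends to $0$ because $\prod (1-\alpha_k) \leq \exp(-\sum \alpha_k)$. Hence $\limsup_K a_K \leq \sum_{k\ge m} c_k$, and letting $m \to \infty$ gives the result.

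The main obstacle is the bookkeeping of independence in the noise terms. $\bm{x}_k$ depends on $\bm{\xi}_{k-1}$, so the term $T_{\bm{\xi}_{k-1}}(\bm{x}_{k-1}) - T(\bm{x}_{k-1})$ must be bounded by conditioning on $\bm{\xi}_{[k-2]}$ rather than jointly with the others. Splitting the noise into two separate terms, each controlled by its own conditional variance bound, avoids any cross-dependence issue.
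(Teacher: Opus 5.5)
Your proof is correct and follows essentially the paper's route. It uses the same identity for $\bm{x}_{k+1}-\bm{x}_k$, a three-term split of $T_{\bm{\xi}_k}(\bm{x}_k)-T_{\bm{\xi}_{k-1}}(\bm{x}_{k-1})$ whose two noise pieces are bounded by $\sigma/\sqrt{b_k}$ and $\sigma/\sqrt{b_{k-1}}$ via conditional Jensen, the same use of Lemma~\ref{lem:1} to produce $M_1$, and the same unrolling argument for the limit. The only difference in the split is cosmetic: you pass through $T(\bm{x}_k)$ and use \eqref{nonexp_T}, whereas the paper passes through $T_{\bm{\xi}_k}(\bm{x}_{k-1})$ and applies (A1) pathwise to each $T_{\xi_{k,i}}$, with the same resulting bound.

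The one slip is your $k=0$ convention. With $\bm{x}_{-1}=\bm{x}_0$ and $\alpha_{-1}=\alpha_0$ the inequality is neither trivial nor true in general: take $\sigma=0$, $\alpha_0<1$, $\bm{x}_0\notin\mathrm{Fix}(T)$; then the right-hand side is $0$ while $\|\bm{x}_1-\bm{x}_0\|=(1-\alpha_0)\|T(\bm{x}_0)-\bm{x}_0\|>0$. Use $\alpha_{-1}=1$ instead, or simply restrict to $k\ge 1$, which is all the limit argument needs.
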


\begin{proof}
The definition of $\bm{x}_{k+1}$ (Step 4 of Algorithm \ref{algo:1}) and the triangle inequality imply that 
\begin{align*}
&\mathbb{E}_{\bm{\xi}_k} \left[ \| \bm{x}_{k+1} - \bm{x}_k \| \Big| \bm{\xi}_{[k-1]} \right]\\
&= 
\mathbb{E}_{\bm{\xi}_k} \left[ \left\| 
\left( \alpha_k \bm{x}_{0} 
+ (1-\alpha_k) T_{\bm{\xi}_k} (\bm{x}_{k}) 
\right)
-
\left( \alpha_{k-1} \bm{x}_{0} 
+ (1-\alpha_{k-1}) T_{\bm{\xi}_{k-1}} (\bm{x}_{k-1}) 
\right) \right\| \Big| \bm{\xi}_{[k-1]} \right]\\
&= 
\mathbb{E}_{\bm{\xi}_k} \big[
\big\|
(\alpha_k - \alpha_{k-1}) \bm{x}_0
+ 
(1 - \alpha_k) (T_{\bm{\xi}_k} (\bm{x}_{k}) - T_{\bm{\xi}_{k-1}} (\bm{x}_{k-1}) )\\
&\quad +
(\alpha_{k-1} - \alpha_k ) T_{\bm{\xi}_{k-1}} (\bm{x}_{k-1})
\big\| \big| \bm{\xi}_{[k-1]} \big]\\
&\leq
(1 - \alpha_k) 
\underbrace{\mathbb{E}_{\bm{\xi}_k} 
\left[ \| T_{\bm{\xi}_k} (\bm{x}_{k}) - T_{\bm{\xi}_{k-1}} (\bm{x}_{k-1}) \|   \big| \bm{\xi}_{[k-1]} \right]}_{X_k}
+ 
|\alpha_k - \alpha_{k-1}| ( \|\bm{x}_0 \| + \underbrace{\|T_{\bm{\xi}_{k-1}} (\bm{x}_{k-1})\|}_{Y_k} ).
\end{align*}
From the triangle inequality, 
\begin{align*}
X_k 
&\leq 
\underbrace{\mathbb{E}_{\bm{\xi}_k} 
\left[ \| T_{\bm{\xi}_k} (\bm{x}_{k}) - T_{\bm{\xi}_{k}} (\bm{x}_{k-1}) \|   \Big| \bm{\xi}_{[k-1]} \right]}_{X_{k,1}}
+ 
\underbrace{\mathbb{E}_{\bm{\xi}_k} 
\left[ \| T_{\bm{\xi}_{k}} (\bm{x}_{k-1}) - T (\bm{x}_{k-1})\|   \Big| \bm{\xi}_{[k-1]} \right]}_{X_{k,2}}\\
&\quad + 
\underbrace{\| T (\bm{x}_{k-1}) - T_{\bm{\xi}_{k-1}} (\bm{x}_{k-1})  \|}_{X_{k,3}}.
\end{align*}
The triangle inequality, Assumption (A1), and the mutual independence of $\bm{\xi}_k$, $\bm{x}_{k}$, and $\bm{x}_{k-1}$ ensure that 
\begin{align*}
X_{k,1} 
&= 
\mathbb{E}_{\bm{\xi}_k}
\left[
\left\|
\frac{1}{b_k} \sum_{i=1}^{b_k} (T_{\xi_{k,i}} (\bm{x}_k) - T_{\xi_{k,i}} (\bm{x}_{k-1}))
\right\| \Bigg| \bm{\xi}_{[k-1]}
\right]\\
&\leq 
\frac{1}{b_k} \sum_{i=1}^{b_k} \mathbb{E}_{\xi_{k,i}}
\left[
\| T_{\xi_{k,i}} (\bm{x}_k) - T_{\xi_{k,i}} (\bm{x}_{k-1}) \| 
\Big| \bm{\xi}_{[k-1]} \right]
\leq 
\frac{1}{b_k} \sum_{i=1}^{b_k} \|\bm{x}_k - \bm{x}_{k-1} \|
= 
\|\bm{x}_k - \bm{x}_{k-1} \|.
\end{align*} 
Proposition \ref{prop:1} and Jensen's inequality imply 
\begin{align*}
&X_{k,2} 
\leq
\sqrt{ 
\mathbb{E}_{\bm{\xi}_k} 
\left[ \| T_{\bm{\xi}_{k}} (\bm{x}_{k-1}) - T (\bm{x}_{k-1})\|^2   \Big| \bm{\xi}_{[k-1]} \right]
}
= 
\sqrt{
\mathbb{V}_{\bm{\xi}_k}\left[ T_{\bm{\xi}_{k}} (\bm{x}_{k-1}) \Big| \bm{\xi}_{[k-1]} \right]
}
\leq 
\frac{\sigma}{\sqrt{b_k}},\\
&
\mathbb{E}_{\bm{\xi}_{k-1}}
\left[ X_{k,3}  \Big| \bm{\xi}_{[k-2]} \right]
\leq 
\sqrt{
\mathbb{V}_{\bm{\xi}_{k-1}}\left[ T_{\bm{\xi}_{k-1}} (\bm{x}_{k-1}) \Big| \bm{\xi}_{[k-2]} \right]
}
\leq 
\frac{\sigma}{\sqrt{b_{k-1}}}.
\end{align*}
Accordingly, we have 
\begin{align}\label{X_k}
\mathbb{E}[ X_k ]
\leq 
\mathbb{E} \left[ \|\bm{x}_k - \bm{x}_{k-1} \| \right]
+ 
\sigma \left( \frac{1}{\sqrt{b_k}} + \frac{1}{\sqrt{b_{k-1}}} \right).
\end{align}
Moreover, Jensen's inequality and the relation $\|\bm{x} + \bm{y}\|^2 \leq 2 \|\bm{x}\|^2 + 2 \|\bm{y}\|^2$ $(\bm{x}, \bm{y} \in \mathbb{R}^d)$ ensure that, for all $\bm{x}^\star \in \mathrm{Fix}(T)$,
\begin{align*}
\mathbb{E} [Y_k] 
\leq \sqrt{\mathbb{E} \left[ \| T_{\bm{\xi}_{k-1}} (\bm{x}_{k-1}) \|^2   \right]}
=
\sqrt{
2 \mathbb{E} \left[ \| T_{\bm{\xi}_{k-1}} (\bm{x}_{k-1}) - \bm{x}^\star \|^2 \right]
+ 2 \|\bm{x}^\star\|^2
},
\end{align*}
which, together with Lemma \ref{lem:1}, implies that 
\begin{align}\label{Y_k}
\mathbb{E} [Y_k]
\leq 
\sqrt{
2 (M+\|\bm{x}^\star\|^2 + \sigma^2) 
}.
\end{align}
Therefore, from \eqref{X_k} and \eqref{Y_k},
\begin{align*}
\mathbb{E} [ \| \bm{x}_{k+1} - \bm{x}_k \| ]
&\leq
(1 - \alpha_k) \mathbb{E}[ X_k ] 
+ 
|\alpha_k - \alpha_{k-1}| (\|\bm{x}_0 \| + \mathbb{E}[ Y_k ])\\
&\leq 
(1 - \alpha_k) \mathbb{E} [ \|\bm{x}_k - \bm{x}_{k-1} \| ]
+ 
M_1 |\alpha_k - \alpha_{k-1}|
+
\sigma \left( \frac{1}{\sqrt{b_k}} + \frac{1}{\sqrt{b_{k-1}}} \right),
\end{align*}
where $M_1 \coloneqq \|\bm{x}_0 \| + \sqrt{2 (M+\|\bm{x}^\star\|^2 + \sigma^2)}$. Hence, for all $k,l \in \mathbb{N}$,
\begin{align*}
&\mathbb{E} [ \| \bm{x}_{k+l+1} - \bm{x}_{k+l} \| ]\\
&\leq
\prod_{j=l}^{k+l-1} (1- \alpha_{j+1}) \mathbb{E} [ \| \bm{x}_{l+1} - \bm{x}_{l} \| ]
+ 
M_1 \sum_{j=l}^{k+l-1} |\alpha_{j+1} - \alpha_j|
+ 
\sigma \sum_{j=l}^{k+l-1} \left( \frac{1}{\sqrt{b_{j+1}}} + \frac{1}{\sqrt{b_j}} \right).
\end{align*}
Since $\sum_{k=0}^{+\infty} \alpha_k = + \infty$ implies that $\lim_{k \to + \infty} \prod_{j=l}^{k+l-1} (1- \alpha_{j+1}) = 0$, we have that, for all $l \in \mathbb{N}$, 
\begin{align*}
\limsup_{k \to + \infty} \mathbb{E} [ \| \bm{x}_{k+1} - \bm{x}_{k} \| ]
&= 
\limsup_{k \to + \infty} \mathbb{E} [ \| \bm{x}_{k+l+1} - \bm{x}_{k+l} \| ]\\
&\leq 
M_1 \sum_{j=l}^{+ \infty} |\alpha_{j+1} - \alpha_j|
+ 
\sigma \sum_{j=l}^{+ \infty} \left( \frac{1}{\sqrt{b_{j+1}}} + \frac{1}{\sqrt{b_j}} \right).
\end{align*}
Moreover, $\sum_{k=0}^{+ \infty} | \alpha_{k+1} - \alpha_k | < + \infty$ and $\sum_{k=0}^{+ \infty} 1/\sqrt{b_k} < + \infty$ imply that $\lim_{l \to + \infty} \sum_{j=l}^{+ \infty} |\alpha_{j+1} - \alpha_j| = 0$ and $\lim_{l \to + \infty} \sum_{j=l}^{+ \infty} ( 1/\sqrt{b_{j+1}} + 1/\sqrt{b_j}) = 0$. Hence, we have 
\begin{align*}
\limsup_{k \to + \infty} \mathbb{E} [ \| \bm{x}_{k+1} - \bm{x}_{k} \| ] \leq 0, \text{ i.e., }
\lim_{k \to + \infty} \mathbb{E} [ \| \bm{x}_{k+1} - \bm{x}_{k} \| ] = 0,
\end{align*}
which completes the proof.
\end{proof}

Lemmas \ref{lem:1} and \ref{lem:2} lead to the following lemma demonstrating that when $\alpha_k$ is diminishing, the distance between $\bm{x}_k$ and $T(\bm{x}_k)$ converges to $0$ in the sense of the expected norm.

\begin{lemma}\label{lem:3}
The sequence $(\bm{x}_k)$ generated by Algorithm \ref{algo:1} under \eqref{conditions_b_a_1} and \eqref{conditions_b_a_2} for Problem \ref{prob:1} satisfies 
\begin{align*}
\mathbb{E} [ \| \bm{x}_{k+1} - T(\bm{x}_{k+1}) \| ]
\leq
\mathbb{E}[ \|\bm{x}_{k} - \bm{x}_{k+1} \|]
+
M_1 \alpha_k
+ 
\frac{\sigma}{\sqrt{b_{k}}},
\end{align*}
for all $k \in \{0\} \cup \mathbb{N}$, where $M_1$ is defined as in Lemma \ref{lem:2}, which implies that, if 
\begin{align}\label{conditions_b_a_3}
\lim_{k \to + \infty} \alpha_k = 0,
\end{align}
then
\begin{align*}
\lim_{k \to + \infty} \mathbb{E} [\| \bm{x}_k - T(\bm{x}_k) \| ] = 0.
\end{align*}
\end{lemma}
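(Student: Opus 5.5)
We plan to bound $\|\bm{x}_{k+1} - T(\bm{x}_{k+1})\|$ by inserting intermediate points via the triangle inequality. The decomposition is
\begin{align*}
\|\bm{x}_{k+1} - T(\bm{x}_{k+1})\|
\leq
\|\bm{x}_{k+1} - T_{\bm{\xi}_k}(\bm{x}_k)\|
+ \|T_{\bm{\xi}_k}(\bm{x}_k) - T(\bm{x}_k)\|
+ \|T(\bm{x}_k) - T(\bm{x}_{k+1})\|.
\end{align*}
We then bound the three terms separately.

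\textbf{First term.} Step 4 of Algorithm \ref{algo:1} gives $\bm{x}_{k+1} - T_{\bm{\xi}_k}(\bm{x}_k) = \alpha_k (\bm{x}_0 - T_{\bm{\xi}_k}(\bm{x}_k))$. Its norm is therefore at most $\alpha_k(\|\bm{x}_0\| + \|T_{\bm{\xi}_k}(\bm{x}_k)\|)$. Exactly as in the derivation of \eqref{Y_k} in Lemma \ref{lem:2}, Jensen's inequality, $\|\bm{x}+\bm{y}\|^2 \leq 2\|\bm{x}\|^2 + 2\|\bm{y}\|^2$, and Lemma \ref{lem:1} (which uses \eqref{conditions_b_a_1}) give $\mathbb{E}[\|T_{\bm{\xi}_k}(\bm{x}_k)\|] \leq \sqrt{2(M + \|\bm{x}^\star\|^2 + \sigma^2)}$. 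Hence the expectation of the first term is at most $M_1 \alpha_k$.

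\textbf{Second term.} We condition on $\bm{\xi}_{[k-1]}$, so that $\bm{x}_k$ is independent of $\bm{\xi}_k$. Jensen's inequality and Proposition \ref{prop:1}(ii) then bound the conditional expectation by $\sqrt{\mathbb{V}_{\bm{\xi}_k}[T_{\bm{\xi}_k}(\bm{x}_k) \mid \bm{\xi}_{[k-1]}]} \leq \sigma/\sqrt{b_k}$. Taking total expectation gives the bound $\sigma/\sqrt{b_k}$.

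\textbf{Third term.} Nonexpansivity of $T$, from \eqref{nonexp_T}, bounds it by $\|\bm{x}_k - \bm{x}_{k+1}\|$.

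Taking total expectation and summing the three bounds yields the displayed inequality.

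For the limit statement, under \eqref{conditions_b_a_1} and \eqref{conditions_b_a_2} Lemma \ref{lem:2} gives $\mathbb{E}[\|\bm{x}_k - \bm{x}_{k+1}\|] \to 0$. Condition \eqref{conditions_b_a_3} gives $M_1\alpha_k \to 0$. Summability of $1/\sqrt{b_k}$ forces $\sigma/\sqrt{b_k} \to 0$. Hence $\mathbb{E}[\|\bm{x}_{k+1} - T(\bm{x}_{k+1})\|] \to 0$, which after an index shift is the claim.

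The only delicate point is the independence bookkeeping in the second term, and the fact that $M_1$ is defined using a fixed $\bm{x}^\star$. Both are handled exactly as in Lemma \ref{lem:2}, so we do not expect a real obstacle.
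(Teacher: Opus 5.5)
Your proof is correct and uses the same three estimates as the paper, arriving at identical constants: Jensen's inequality with Proposition \ref{prop:1}(ii) for $\|T_{\bm{\xi}_k}(\bm{x}_k) - T(\bm{x}_k)\|$, nonexpansivity of $T$, and the bound \eqref{Y_k} coming from Lemma \ref{lem:1}. The only difference is presentational. The paper first splits $\bm{x}_{k+1} - T(\bm{x}_{k+1}) = \alpha_k(\bm{x}_0 - T(\bm{x}_{k+1})) + (1-\alpha_k)(T_{\bm{\xi}_k}(\bm{x}_k) - T(\bm{x}_{k+1}))$ and bounds each piece, after which the weights recombine, whereas your single three-term triangle inequality reaches the same bound more directly.
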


\begin{proof}
The definition of $\bm{x}_{k+1}$ (Step 4 of Algorithm \ref{algo:1}) and the triangle inequality imply that
\begin{align*}
&\mathbb{E}_{\bm{\xi}_k} \left[\| \bm{x}_{k+1} - T(\bm{x}_{k+1}) \| \Big| \bm{\xi}_{[k-1]} \right]\\
&=
\mathbb{E}_{\bm{\xi}_k} \left[\|
\alpha_k (\bm{x}_{0} - T(\bm{x}_{k+1})) 
+ (1 - \alpha_k) (T_{\bm{\xi}_{k}} (\bm{x}_{k}) - T(\bm{x}_{k+1}))
\| \Big| \bm{\xi}_{[k-1]} \right]\\
&\leq
(1 - \alpha_{k}) 
\underbrace{\mathbb{E}_{\bm{\xi}_k} \left[ 
\| T_{\bm{\xi}_{k}} (\bm{x}_{k}) - T(\bm{x}_{k+1}) \|
\Big| \bm{\xi}_{[k-1]} \right]}_{Z_k}
+ 
\alpha_{k} \underbrace{\mathbb{E}_{\bm{\xi}_k} \left[ \|\bm{x}_{0} - T(\bm{x}_{k+1})\| \Big| \bm{\xi}_{[k-1]} \right]}_{W_k}.
\end{align*} 
From the triangle inequality, 
\begin{align*}
Z_k 
\leq 
\underbrace{ 
\mathbb{E}_{\bm{\xi}_k} \left[ \| T_{\bm{\xi}_{k}} (\bm{x}_{k}) - T(\bm{x}_{k}) \| \Big| \bm{\xi}_{[k-1]} \right]
}_{Z_{k,1}}
+
\underbrace{\mathbb{E}_{\bm{\xi}_k} \left[ 
\| T(\bm{x}_{k}) - T(\bm{x}_{k+1}) \|
\Big| \bm{\xi}_{[k-1]} \right]}_{Z_{k,2}}.
\end{align*}
Proposition \ref{prop:1} and Jensen's inequality imply that
\begin{align*}
Z_{k,1} 
\leq 
\sqrt{\mathbb{E}_{\bm{\xi}_{k}} \left[ \| T_{\bm{\xi}_{k}} (\bm{x}_{k}) - T(\bm{x}_{k}) \|^2 \Big| \bm{\xi}_{[k-1]} \right]
}
= 
\sqrt{ \mathbb{V}_{\bm{\xi}_{k}} \left[ T_{\bm{\xi}_{k}} (\bm{x}_{k}) \Big| \bm{\xi}_{[k-1]} \right] }
\leq \frac{\sigma}{\sqrt{b_{k}}}.
\end{align*}
Under Assumption (A1), $T$ defined by \eqref{true_t} is nonexpansive (see also \eqref{nonexp_T}). 
Hence, we have
\begin{align*}
Z_{k,2} \leq \mathbb{E}_{\bm{\xi}_{k}} \left[ \|\bm{x}_{k} - \bm{x}_{k+1} \| \Big| \bm{\xi}_{[k-1]} \right].
\end{align*} 
Accordingly, 
\begin{align}\label{Z_k}
\mathbb{E}[Z_k]
\leq 
\mathbb{E}[Z_{k,1}]
+ 
\mathbb{E}[Z_{k,2}]
\leq
\frac{\sigma}{\sqrt{b_{k}}}
+ 
\mathbb{E}[ \|\bm{x}_{k} - \bm{x}_{k+1} \|].
\end{align}
From the triangle inequality, 
\begin{align}\label{W_k}
\begin{split}
\mathbb{E}[W_k]
&\leq 
\|\bm{x}_0\| 
+ \underbrace{\mathbb{E}[\|T(\bm{x}_{k+1}) - T(\bm{x}_k)\|]}_{\mathbb{E}[Z_{k,2}]}
+ \underbrace{\mathbb{E}[ \| T(\bm{x}_k) - T_{\bm{\xi}_k} (\bm{x}_k) \| ]}_{\mathbb{E}[Z_{k,1}]}
+ \underbrace{\mathbb{E}[ \| T_{\bm{\xi}_k} (\bm{x}_k) \|]}_{\mathbb{E}[Y_{k+1}]}\\
&\leq
M_1 + \mathbb{E}[ \|\bm{x}_{k} - \bm{x}_{k+1} \|] + \frac{\sigma}{\sqrt{b_{k}}},
\end{split}
\end{align}
where $\|\bm{x}_0 \| + \mathbb{E}[Y_{k+1}] \leq \|\bm{x}_0 \| + \sqrt{2 (M+\|\bm{x}^\star\|^2 + \sigma^2)} \eqqcolon M_1$ (see \eqref{Y_k}). Therefore, from \eqref{Z_k} and \eqref{W_k},
\begin{align*}
&\mathbb{E}[ \| \bm{x}_{k+1} - T(\bm{x}_{k+1}) \| ]\\
&\leq
(1 - \alpha_k) \left( \frac{\sigma}{\sqrt{b_{k}}}
+ 
\mathbb{E}[ \|\bm{x}_{k} - \bm{x}_{k+1} \|] \right)
+ 
\alpha_k \left( M_1 + \mathbb{E}[ \|\bm{x}_{k} - \bm{x}_{k+1} \|] + \frac{\sigma}{\sqrt{b_{k}}}  \right)\\
&=
\mathbb{E}[ \|\bm{x}_{k} - \bm{x}_{k+1} \|]
+ 
\frac{\sigma}{\sqrt{b_{k}}}
+ 
M_1 \alpha_k.
\end{align*}
Applying the conditions $\lim_{k \to + \infty} \mathbb{E}[ \|\bm{x}_{k+1} - \bm{x}_{k} \|] = 0$ in Lemma \ref{lem:2}, $\lim_{k \to + \infty} 1/\sqrt{b_k} = 0$, which comes from \eqref{conditions_b_a_2}, and $\lim_{k \to + \infty} \alpha_k = 0$ in \eqref{conditions_b_a_3}, leads to
\begin{align*}
\lim_{k \to + \infty} \mathbb{E}[ \| \bm{x}_{k+1} - T(\bm{x}_{k+1}) \| ] = 0,
\end{align*}
which completes the proof.
\end{proof}

Lemma \ref{lem:3} guarantees the convergence of Algorithm \ref{algo:1} as follows.

\begin{theorem}\label{thm:1}
The sequence $(\bm{x}_k)$ generated by Algorithm \ref{algo:1} in Problem \ref{prob:1} with $(\alpha_k)$ and $(b_k)$ satisfying 
\begin{align}\label{condition_1}
\begin{split}
&\lim_{k \to + \infty} \alpha_k = 0, \text{ } 
\sum_{k=0}^{+\infty} \alpha_k = + \infty, \text{ }
\sum_{k=0}^{+\infty} | \alpha_{k+1} - \alpha_k| < + \infty,\\
&\frac{1}{b_k} \leq \alpha_k^2 \text{ } (k \in \{0\} \cup \mathbb{N}), \text{ and } 
\sum_{k=0}^{+ \infty} \frac{1}{\sqrt{b_k}} < + \infty
\end{split}
\end{align}
converges in mean square to $P_{\mathrm{Fix}(T)}(\bm{x}_0)$; i.e., the expected squared norm $\mathbb{E}[\| \bm{x}_k - P_{\mathrm{Fix}(T)}(\bm{x}_0)\|^2]$ converges to $0$.
\end{theorem}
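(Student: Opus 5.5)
Write $\bm{x}^\star \coloneqq P_{\mathrm{Fix}(T)}(\bm{x}_0)$. The plan follows the classical Halpern/Wittmann argument, carried out in expectation. First note what \eqref{condition_1} gives us. Since $1/b_k \leq \alpha_k^2 \leq \alpha_k$, condition \eqref{conditions_b_a_1} holds, so Lemma \ref{lem:1} applies. Conditions \eqref{conditions_b_a_2} and \eqref{conditions_b_a_3} also hold, so Lemmas \ref{lem:2} and \ref{lem:3} give $\mathbb{E}[\|\bm{x}_k - T(\bm{x}_k)\|] \to 0$. The proof then has two parts: a one-step recursion for $a_k \coloneqq \mathbb{E}[\|\bm{x}_k - \bm{x}^\star\|^2]$, and a limsup estimate for the cross term.

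\textbf{Recursion.} Write $\bm{x}_{k+1} - \bm{x}^\star = (1-\alpha_k)(T_{\bm{\xi}_k}(\bm{x}_k) - \bm{x}^\star) + \alpha_k(\bm{x}_0 - \bm{x}^\star)$ and use $\|\bm{u}+\bm{v}\|^2 \leq \|\bm{u}\|^2 + 2\langle \bm{v}, \bm{u}+\bm{v}\rangle$. This gives
\begin{align*}
\|\bm{x}_{k+1} - \bm{x}^\star\|^2 \leq (1-\alpha_k)^2 \|T_{\bm{\xi}_k}(\bm{x}_k) - \bm{x}^\star\|^2 + 2\alpha_k \langle \bm{x}_0 - \bm{x}^\star, \bm{x}_{k+1} - \bm{x}^\star\rangle .
\end{align*}
Take conditional expectation and apply \eqref{ineq_prop}; then take total expectation. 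Using $\sigma^2/b_k \leq \sigma^2\alpha_k^2$, we get
\begin{align*}
a_{k+1} \leq (1-\alpha_k) a_k + \alpha_k \left( \sigma^2 \alpha_k + 2\,\mathbb{E}[\langle \bm{x}_0 - \bm{x}^\star, \bm{x}_{k+1} - \bm{x}^\star\rangle] \right).
\end{align*}
This is exactly where $1/b_k \leq \alpha_k^2$ is needed: it makes the noise term $o(\alpha_k)$. We then invoke the standard Xu-type lemma: if $a_{k+1} \leq (1-\alpha_k)a_k + \alpha_k \beta_k$ with $\sum \alpha_k = +\infty$ and $\limsup \beta_k \leq 0$, then $a_k \to 0$. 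This lemma is elementary, and I would prove it inline.

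\textbf{Main obstacle: $\limsup_k \mathbb{E}[\langle \bm{x}_0 - \bm{x}^\star, \bm{x}_k - \bm{x}^\star\rangle] \leq 0$.} In the deterministic proof one extracts a subsequence with $\bm{x}_{k_j} \to \bar{\bm{x}} \in \mathrm{Fix}(T)$ and uses the projection characterization $\langle \bm{x}_0 - \bm{x}^\star, \bm{y} - \bm{x}^\star\rangle \leq 0$ for $\bm{y} \in \mathrm{Fix}(T)$. That pathwise argument is not available here, because we only control expectations. My plan is to work with the fixed point set's projection $\bm{p}_k \coloneqq P_{\mathrm{Fix}(T)}(\bm{x}_k)$ and split
\begin{align*}
\langle \bm{x}_0 - \bm{x}^\star, \bm{x}_k - \bm{x}^\star\rangle = \langle \bm{x}_0 - \bm{x}^\star, \bm{x}_k - \bm{p}_k\rangle + \langle \bm{x}_0 - \bm{x}^\star, \bm{p}_k - \bm{x}^\star\rangle .
\end{align*}
The second term is $\leq 0$ pointwise by the variational characterization of $P_{\mathrm{Fix}(T)}$. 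It therefore suffices to show $\mathbb{E}[\mathrm{dist}(\bm{x}_k, \mathrm{Fix}(T))] \to 0$. This is where I expect difficulty, since $\|\bm{x} - T(\bm{x})\| \to 0$ gives $\mathrm{dist}(\bm{x}, \mathrm{Fix}(T)) \to 0$ only on bounded sets, via a compactness (demiclosedness) argument in $\mathbb{R}^d$.

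To handle it, I would first show that for every $R>0$ and $\varepsilon>0$ there is $\delta>0$ with $\|\bm{x} - T(\bm{x})\| < \delta$ and $\|\bm{x}\| \leq R$ implying $\mathrm{dist}(\bm{x}, \mathrm{Fix}(T)) < \varepsilon$. The proof is by contradiction, using compactness of $B_R$ and continuity of $T$. Next, Markov's inequality together with the bound $a_k \leq M$ from Lemma \ref{lem:1} controls the event $\{\|\bm{x}_k\| > R\}$. On that event, Cauchy--Schwarz bounds the contribution by $\|\bm{x}_0 - \bm{x}^\star\| \sqrt{M'}\,\mathrm{P}(\|\bm{x}_k\|>R)^{1/2}$, which is $O(R^{-1})$ uniformly in $k$. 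On the complementary event, Lemma \ref{lem:3} with Markov gives $\mathrm{P}(\|\bm{x}_k - T(\bm{x}_k)\| \geq \delta) \to 0$. The resulting bad set is again handled by Cauchy--Schwarz with the uniform second-moment bound, since $\mathrm{dist}(\bm{x}_k, \mathrm{Fix}(T)) \leq \|\bm{x}_k - \bm{x}^\star\|$.

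Letting $k \to \infty$, then $\varepsilon \to 0$, then $R \to \infty$ gives $\limsup_k \mathbb{E}[\langle \bm{x}_0 - \bm{x}^\star, \bm{x}_{k+1} - \bm{x}^\star\rangle] \leq 0$. Hence $\beta_k \coloneqq \sigma^2\alpha_k + 2\,\mathbb{E}[\langle \bm{x}_0 - \bm{x}^\star, \bm{x}_{k+1} - \bm{x}^\star\rangle]$ has nonpositive limsup, and the recursion lemma yields $\mathbb{E}[\|\bm{x}_k - \bm{x}^\star\|^2] \to 0$.
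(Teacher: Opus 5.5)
Your proof is correct, but it reaches the key step by a different route from the paper. The recursions are essentially equivalent. The paper expands $\|\alpha_k(\bm{x}_0-\bm{x}^\star)+(1-\alpha_k)(T_{\bm{\xi}_k}(\bm{x}_k)-\bm{x}^\star)\|^2$ exactly and uses unbiasedness to get the cross term $\langle \bm{x}_0-\bm{x}^\star,\mathbb{E}[T(\bm{x}_k)]-\bm{x}^\star\rangle$. You use the subgradient inequality with $\bm{x}_{k+1}$ in the cross term. Both finish with the same Xu-type lemma, which the paper proves inline.

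The real difference is how you show that the cross term has nonpositive $\limsup$. The paper takes a subsequence realizing the $\limsup$ and asserts that Lemma~\ref{lem:1} gives a further subsequence converging in mean to a random vector $\bar{\bm{x}}$. It then shows $\bar{\bm{x}}\in\mathrm{Fix}(T)$ a.s.\ via Lemma~\ref{lem:3} and applies the projection inequality to $\mathbb{E}[\bar{\bm{x}}]\in\mathrm{Fix}(T)$. A uniform second-moment bound does not by itself justify that extraction: i.i.d.\ Rademacher variables are bounded in $L^2$, yet no subsequence converges in mean. So the paper's argument is thin exactly where you located the main obstacle.

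Your decomposition through $P_{\mathrm{Fix}(T)}(\bm{x}_k)$ reduces the matter to $\mathbb{E}[\mathrm{dist}(\bm{x}_k,\mathrm{Fix}(T))]\to 0$. Your compactness lemma on $B_R$, combined with the Markov and Cauchy--Schwarz truncations, derives this from exactly what Lemmas~\ref{lem:1} and~\ref{lem:3} provide: uniform $L^2$ bounds and $L^1$ convergence of the residual $\bm{x}_k-T(\bm{x}_k)$.

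The price is a genuine use of finite dimensionality. The uniform statement that a small residual on a bounded set forces proximity to $\mathrm{Fix}(T)$ fails in infinite-dimensional spaces; the right shift on $\ell^2$, whose only fixed point is $\bm{0}$, is a counterexample. This costs nothing here, since the problem is posed in $\mathbb{R}^d$. In exchange you get a complete argument that controls the whole sequence rather than a subsequence.
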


\begin{proof}
First, we note that \eqref{condition_1} implies that $(\alpha_k)$ and $(b_k)$ satisfy \eqref{conditions_b_a_1}, \eqref{conditions_b_a_2}, and \eqref{conditions_b_a_3}. Hence, Lemmas \ref{lem:1}, \ref{lem:2}, and \ref{lem:3} hold. Let $\bm{x}^\star \coloneqq P_{\mathrm{Fix}(T)}(\bm{x}_0) \in \mathrm{Fix}(T)$. The definition of $\bm{x}_{k+1}$ (Step 4 of Algorithm \ref{algo:1}) and the relation $\|\bm{x} + \bm{y} \|^2 = \|\bm{x}\|^2 + 2 \langle \bm{x}, \bm{y} \rangle + \|\bm{y}\|^2$ $(\bm{x},\bm{y} \in \mathbb{R}^d)$ imply that
\begin{align*}
&\mathbb{E}_{\bm{\xi}_k} \left[ \| \bm{x}_{k+1} - \bm{x}^\star \|^2 \Big| \bm{\xi}_{[k-1]} \right]\\
&= 
\mathbb{E}_{\bm{\xi}_k} \left[ \| \alpha_k (\bm{x}_{0} - \bm{x}^\star)
+ (1-\alpha_k) (T_{\bm{\xi}_k} (\bm{x}_{k}) - \bm{x}^\star) \|^2 \Big| \bm{\xi}_{[k-1]} \right]\\
&= 
\alpha_k^2 \| \bm{x}_{0} - \bm{x}^\star \|^2
+ 2 \alpha_k (1 - \alpha_k)
\mathbb{E}_{\bm{\xi}_k} \left[ \langle \bm{x}_{0} - \bm{x}^\star, T_{\bm{\xi}_k} (\bm{x}_{k}) - \bm{x}^\star \rangle \Big| \bm{\xi}_{[k-1]} \right]\\
&\quad 
+ 
(1 - \alpha_k)^2 \mathbb{E}_{\bm{\xi}_k} \left[ \|T_{\bm{\xi}_k} (\bm{x}_{k}) - \bm{x}^\star\|^2 \Big| \bm{\xi}_{[k-1]} \right],
\end{align*}
which, together with \eqref{ineq_prop}, 
\begin{align*}
\mathbb{E}_{\bm{\xi}_k} \left[ \langle \bm{x}_{0} - \bm{x}^\star, T_{\bm{\xi}_k} (\bm{x}_{k}) - \bm{x}^\star \rangle \Big| \bm{\xi}_{[k-1]} \right]
&= 
\left\langle \bm{x}_{0} - \bm{x}^\star, \mathbb{E}_{\bm{\xi}_k} \left[ T_{\bm{\xi}_k} (\bm{x}_{k}) \Big| \bm{\xi}_{[k-1]} \right] - \bm{x}^\star \right\rangle\\
&=
\langle \bm{x}_{0} - \bm{x}^\star, T (\bm{x}_{k}) - \bm{x}^\star \rangle,
\end{align*}
$\alpha_k \in (0,1)$, and $1/b_k \leq \alpha_k^2$, implies that 
\begin{align*}
&\mathbb{E}_{\bm{\xi}_k} \left[ \| \bm{x}_{k+1} - \bm{x}^\star \|^2 \Big| \bm{\xi}_{[k-1]} \right]\\
&\leq
\alpha_k^2 \| \bm{x}_{0} - \bm{x}^\star \|^2
+ 2 \alpha_k (1 - \alpha_k) \langle \bm{x}_{0} - \bm{x}^\star, T (\bm{x}_{k}) - \bm{x}^\star \rangle
+ (1 - \alpha_k)^2 \left( \| \bm{x}_{k} - \bm{x}^\star \|^2 + \frac{\sigma^2}{b_k}  \right)\\
&\leq
(1 - \alpha_k) \| \bm{x}_{k} - \bm{x}^\star \|^2 
+ 
2 \alpha_k (1 - \alpha_k) \langle \bm{x}_{0} - \bm{x}^\star, T (\bm{x}_{k}) - \bm{x}^\star \rangle
+ 
\alpha_k^2 \| \bm{x}_{0} - \bm{x}^\star \|^2
+
\alpha_k^2 \sigma^2.
\end{align*}
Accordingly, we have 
\begin{align*}
&\mathbb{E} \left[ \| \bm{x}_{k+1} - \bm{x}^\star \|^2 \right]\\
&\leq
(1 - \alpha_k) \mathbb{E} \left[ \| \bm{x}_{k} - \bm{x}^\star \|^2 \right] 
+ 
2 \alpha_k (1 - \alpha_k) 
\underbrace{\left\langle \bm{x}_{0} - \bm{x}^\star, \mathbb{E} \left[ T (\bm{x}_{k}) \right] - \bm{x}^\star \right\rangle}_{B_k}
+ 
\alpha_k \underbrace{(M_2 \alpha_k)}_{C_k},
\end{align*}
where $M_2 \coloneqq \| \bm{x}_{0} - \bm{x}^\star \|^2 + \sigma^2$. The property of the limit superior of $B_k$ ensures that there exists a subsequence $(\bm{x}_{k_i})$ of $(\bm{x}_k)$ such that 
\begin{align}\label{B_k}
\limsup_{k \to + \infty} B_k 
= \lim_{i \to + \infty} \left\langle \bm{x}_{0} - \bm{x}^\star, \mathbb{E} \left[ T (\bm{x}_{k_i}) \right] - \bm{x}^\star \right\rangle.
\end{align}
Moreover, from Lemma \ref{lem:1}, there exists a subsequence $(\bm{x}_{k_{i_j}})$ of $(\bm{x}_{k_i})$ which converges in mean. Let $\bar{\bm{x}}$ be the convergent random variable of $(\bm{x}_{k_{i_j}})$. From the triangle inequality and Assumption (A1) (which implies the nonexpansivity of $T$; see also \eqref{nonexp_T}), 
\begin{align*}
\mathbb{E}[ \| \bar{\bm{x}} - T (\bar{\bm{x}})  \|  ]
&\leq
\mathbb{E} \left[ \left\| \bar{\bm{x}} -  \bm{x}_{k_{i_j}} \right\| \right]
+ 
\mathbb{E} \left[ \left\| \bm{x}_{k_{i_j}} - T (\bm{x}_{k_{i_j}})  \right\| \right]
+ 
\mathbb{E} \left[ \left\| T (\bm{x}_{k_{i_j}}) - T (\bar{\bm{x}})  \right\| \right]\\
&\leq
2 \mathbb{E} \left[ \left\|  \bm{x}_{k_{i_j}} - \bar{\bm{x}} \right\| \right]
+ 
\mathbb{E} \left[ \left\| \bm{x}_{k_{i_j}} - T (\bm{x}_{k_{i_j}})  \right\| \right].
\end{align*}
Hence, Lemma \ref{lem:3} leads to the finding that
\begin{align*}
\mathbb{E}[ \| \bar{\bm{x}} - T (\bar{\bm{x}})  \|  ] = 0, 
\text{i.e., } \bar{\bm{x}} \in \mathrm{Fix}(T) \text{ a.s.},
\end{align*}
which, together with the closedness and convexity of $\mathrm{Fix}(T)$, implies that $\mathbb{E}[\bar{\bm{x}}] \in \mathrm{Fix}(T)$. From \eqref{B_k}, the Cauchy-Schwarz inequality, and the triangle inequality, 
\begin{align*}
\limsup_{k \to + \infty} B_k 
&= \lim_{j \to + \infty} \left\langle \bm{x}_{0} - \bm{x}^\star, \mathbb{E} \left[ T (\bm{x}_{k_{i_j}}) \right] - \bm{x}^\star \right\rangle\\
&= 
\lim_{j \to + \infty} \left\langle \bm{x}_{0} - \bm{x}^\star, \mathbb{E} \left[ T (\bm{x}_{k_{i_j}}) - \bm{x}_{k_{i_j}} \right] 
+ \mathbb{E} \left[ \bm{x}_{k_{i_j}} - \bar{\bm{x}} \right] + \mathbb{E}[\bar{\bm{x}}] - \bm{x}^\star \right\rangle\\
&\leq
\| \bm{x}_{0} - \bm{x}^\star \| \lim_{j \to + \infty}
\left\| \mathbb{E} \left[ T (\bm{x}_{k_{i_j}}) - \bm{x}_{k_{i_j}} \right] \right\| 
+ \left\| \mathbb{E} \left[ \bm{x}_{k_{i_j}} - \bar{\bm{x}} \right] \right\|\\
&\quad 
+ 
\left\langle \bm{x}_{0} - \bm{x}^\star,\mathbb{E}[\bar{\bm{x}}] - \bm{x}^\star \right\rangle,
\end{align*}
which, together with Jensen's inequality, Lemma \ref{lem:3}, $\lim_{j \to + \infty} \mathbb{E}[\| \bm{x}_{k_{i_j}} - \bar{\bm{x}} \|] = 0$, and the property of $P_{\mathrm{Fix}(T)}$ (i.e., $\langle \bm{x}_{0} - P_{\mathrm{Fix}(T)}(\bm{x}_0), \bm{x} - P_{\mathrm{Fix}(T)}(\bm{x}_0) \rangle \leq 0$ for all $\bm{x} \in \mathrm{Fix}(T)$), implies that 
\begin{align*}
\limsup_{k \to + \infty} B_k
\leq 
\left\langle \bm{x}_{0} - \bm{x}^\star,\mathbb{E}[\bar{\bm{x}}] - \bm{x}^\star \right\rangle
\leq 0.
\end{align*}
Hence, together with $\lim_{k \to + \infty} \alpha_k = 0$, 
for all $\epsilon > 0$, there exists $k_0 \in \mathbb{N}$ such that 
\begin{align*}
B_k \leq \epsilon \text{ and } C_k \leq \epsilon
\end{align*}
for all $k \geq k_0$. Therefore, for all $k \geq k_0$, 
\begin{align*}
\mathbb{E} \left[ \| \bm{x}_{k+1} - \bm{x}^\star \|^2 \right]
&\leq
(1 - \alpha_k) \mathbb{E} \left[ \| \bm{x}_{k} - \bm{x}^\star \|^2 \right] 
+ 
2 \alpha_k (1 - \alpha_k) \epsilon
+ 
\alpha_k \epsilon\\
&\leq
(1 - \alpha_k) \mathbb{E} \left[ \| \bm{x}_{k} - \bm{x}^\star \|^2 \right]
+ 3 \alpha_k \epsilon\\
&= 
(1 - \alpha_k) \mathbb{E} \left[ \| \bm{x}_{k} - \bm{x}^\star \|^2 \right]
+ 3 \{ 1 - (1 - \alpha_k) \} \epsilon,
\end{align*}
which implies that 
\begin{align*}
\mathbb{E} \left[ \| \bm{x}_{k+1} - \bm{x}^\star \|^2 \right]
\leq
\prod_{j=k_0}^k (1 - \alpha_j) \mathbb{E} \left[ \| \bm{x}_{k_0} - \bm{x}^\star \|^2 \right]
+ 3 \epsilon \left\{ 1 -  \prod_{j=k_0}^k (1 - \alpha_j)  \right\}.
\end{align*}
Since $\sum_{k=0}^{+\infty} \alpha_k = + \infty$ implies that $\prod_{j=k_0}^{+\infty} (1 - \alpha_j) = 0$, we have that, for all $\epsilon > 0$, 
\begin{align*}
\limsup_{k \to + \infty} \mathbb{E} \left[ \| \bm{x}_{k+1} - \bm{x}^\star \|^2 \right]
\leq 3 \epsilon, \text{ i.e., }
\lim_{k \to + \infty} \mathbb{E} \left[ \| \bm{x}_{k+1} - \bm{x}^\star \|^2 \right] = 0.
\end{align*}
This completes the proof.
\end{proof}

Let us examine some examples that satisfy \eqref{condition_1}. A diminishing step size $\alpha_k$ defined for all $k \in \{0\} \cup \mathbb{N}$ by 
\begin{align}\label{diminishing_1}
\alpha_k = \frac{1}{(k+1)^a} \in (0,1],
\end{align}
where $a \in (0,1]$, satisfies $\lim_{k \to + \infty} \alpha_k = 0$. Moreover, $\sum_{k=0}^{+ \infty} \alpha_k = + \infty$ holds, from
\begin{align}\label{diminishing}
\sum_{k=0}^{K-1} \alpha_k
\geq 
\begin{dcases}
\frac{(K+1)^{1-a} -1}{1-a} 
&\text{ } \left(a \in (0, 1) \right)\\
\log (K+1)  
&\text{ } \left(a =1 \right).
\end{dcases}
\end{align}
From $\sum_{k=0}^{K-1} |\alpha_{k+1} - \alpha_k|= \alpha_0 - \alpha_K \leq \alpha_0 = 1$, we have $\sum_{k=0}^{+ \infty} |\alpha_{k+1} - \alpha_k| < + \infty$.

An increasing batch size $b_k$ such as 
\begin{align}\label{increasing}
\begin{split}
&\text{[Polynomial increasing batch size] }
b_k = (a_0 k + b_0)^c, \text{ or}\\
&\text{[Exponential increasing batch size] }
b_k = b_0 \delta^k,
\end{split}
\end{align} 
where $a_0 > 0$, $c > 2$, and $\delta > 1$, satisfies $\sum_{k=0}^{+ \infty} 1/\sqrt{b_k} < + \infty$, from
\begin{align}\label{B_value}
\sum_{k=0}^{K-1} \frac{1}{\sqrt{b_k}}
\leq
B 
\coloneqq
\begin{dcases}
\frac{c}{(c-2) \min \{ a_0, b_0 \}^{\frac{c}{2}}} &\text{ (Polynomial increasing batch size)}\\
\frac{\sqrt{\delta}}{(\sqrt{\delta} -1) \sqrt{b_0}} &\text{ (Exponential increasing batch size)}. 
\end{dcases}
\end{align}
Moreover, $\alpha_k$ defined by \eqref{diminishing_1} and $b_k$ defined by \eqref{increasing} satisfy$1/b_k \leq \alpha_k^2$ for a sufficiently large $k$.

A constant batch size $b_k = b$ does not satisfy $\sum_{k=0}^{+ \infty} 1/\sqrt{b_k} < + \infty$. Hence, Theorem \ref{thm:1} says that the use of increasing batch sizes is essential for guaranteeing the convergence of Algorithm \ref{algo:1}. This claim also appears in the convergence analysis of mini-batch stochastic gradient descent \cite{umeda2025increasing}.

\subsection{Convergence rate}\label{sec:3.2}
The following mapping $T_{\bm{\xi}_k}^\lambda$ is a convex combination of $\mathrm{Id}$ and $T_{\bm{\xi}_k}$ defined by \eqref{mini_batch_t}:
\begin{align}\label{S_k}
T_{\bm{\xi}_k}^\lambda \coloneqq \lambda \mathrm{Id} + (1-\lambda)T_{\bm{\xi}_k}, 
\end{align}
where $\lambda \in [0,1)$. 
We will consider the following algorithm:
\begin{align}\label{modified_H}
\bm{x}_{k+1} 
\coloneqq T_{\bm{\xi}_k}^\lambda (\bm{x}_k) + \alpha_k \left(\bm{x}_0 - T_{\bm{\xi}_k}^\lambda (\bm{x}_k) \right)
= 
T_{\bm{\xi}_k}^\lambda (\bm{x}_k) - \alpha_k \nabla f_0 \left( T_{\bm{\xi}_k}^\lambda (\bm{x}_k) \right),
\end{align}
where $f_0 (\bm{x}) \coloneqq (1/2)\|\bm{x} - \bm{x}_0\|^2$ satisfies $\nabla f_0 (\bm{x}) = \bm{x} - \bm{x}_0$. Under Assumptions (A1), (A2), and (A3), the following hold:
\begin{enumerate}
\item[(A1)'] $T_i^\lambda \coloneqq \lambda \mathrm{Id} + (1 - \lambda) T_i$ $(i\in [n])$ is nonexpansive;
\item[(A2)'] $\mathbb{V}_\xi [T_\xi^\lambda (\bm{x})] = (1 - \lambda)^2 \mathbb{V}_\xi [T_\xi (\bm{x})] \leq (1 - \lambda)^2 \sigma^2 \leq \sigma^2$;
\item[(A3)'] $\mathrm{Fix}(T^\lambda) = \mathrm{Fix}(T) \neq \emptyset$, 
where $T^\lambda \coloneqq \lambda \mathrm{Id} + (1 - \lambda) T$.
\end{enumerate} 
Since algorithm \eqref{modified_H} can be obtained by replacing $T_{\bm{\xi}_k}$ in Algorithm \ref{algo:1} with $T_{\bm{\xi}_k}^\lambda$, Theorem \ref{thm:1} guarantees that, under (A1)', (A2)', and (A3)', algorithm \eqref{modified_H} with any $\lambda \in [0,1]$ converges in mean square to $\bm{x}^\star = P_{\mathrm{Fix}(T^\lambda)}(\bm{x}_0) = P_{\mathrm{Fix}(T)}(\bm{x}_0)$.

To prove the following lemma, we need to restrict $\lambda$ to $1/2 < \lambda \leq 3/4$ so that $\alpha_k$ can satisfy $0 < \alpha_k \leq 1$. 

\begin{lemma}\label{lem:4}
The sequence $(\bm{x}_k)$ generated by \eqref{modified_H} for Problem \ref{prob:1} with $\alpha_k \leq (2 \lambda -1)/(2(1-\lambda))$ $(k \in \{0\} \cup \mathbb{N})$ satisfies 
\begin{align*}
&\mathbb{E}_{\bm{\xi}_k} \left[ \| \bm{x}_{k+1} - \bm{x}^\star \|^2 \Big| \bm{\xi}_{[k-1]} \right]\\
&\leq 
\| \bm{x}_{k} - \bm{x}^\star \|^2
+ 
2 \alpha_k \left( f_0^\star - f_0 (\bm{x}_k)  \right)
+ 
2 \alpha_k^2 \mathbb{E}_{\bm{\xi}_k} \left[ \left\| \nabla f_0 \left( T_{\bm{\xi}_k}^\lambda (\bm{x}_k) \right)  \right\|^2 \Big| \bm{\xi}_{[k-1]} \right] + \frac{\sigma^2}{b_k},
\end{align*}
for all $k \in \{0\} \cup \mathbb{N}$, where $1/2 < \lambda \leq 3/4$ and $f_0^\star \coloneqq f_0 (P_{\mathrm{Fix}(T)} (\bm{x}_0))$ is the optimal value of $f_0$ over $\mathrm{Fix}(T)$.
\end{lemma}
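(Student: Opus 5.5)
The plan is to set $\bm{y}_k \coloneqq T_{\bm{\xi}_k}^\lambda(\bm{x}_k)$ and $\bm{z}_k \coloneqq T_{\bm{\xi}_k}(\bm{x}_k)$, so that \eqref{modified_H} reads $\bm{x}_{k+1} = \bm{y}_k - \alpha_k \nabla f_0(\bm{y}_k)$ with $\bm{y}_k = \lambda \bm{x}_k + (1-\lambda)\bm{z}_k$. Throughout, $\bm{x}^\star \coloneqq P_{\mathrm{Fix}(T)}(\bm{x}_0)$.

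\textbf{Step 1 (expand the gradient step).} I would first write
\begin{align*}
\|\bm{x}_{k+1} - \bm{x}^\star\|^2 = \|\bm{y}_k - \bm{x}^\star\|^2 - 2\alpha_k \langle \bm{y}_k - \bm{x}^\star, \nabla f_0(\bm{y}_k)\rangle + \alpha_k^2 \|\nabla f_0(\bm{y}_k)\|^2 .
\end{align*}
Since $f_0$ is convex, the inner product is bounded below by $f_0(\bm{y}_k) - f_0^\star$; the additional strong-convexity term can simply be dropped. Because $f_0$ is quadratic, the identity
\begin{align*}
f_0(\bm{x}_k) = f_0(\bm{y}_k) + \langle \nabla f_0(\bm{y}_k), \bm{x}_k - \bm{y}_k\rangle + \tfrac12\|\bm{x}_k - \bm{y}_k\|^2
\end{align*}
holds exactly. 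Using it, I replace $-2\alpha_k(f_0(\bm{y}_k)-f_0^\star)$ by $2\alpha_k(f_0^\star - f_0(\bm{x}_k))$ plus the error terms $2\alpha_k\langle \nabla f_0(\bm{y}_k), \bm{x}_k - \bm{y}_k\rangle + \alpha_k\|\bm{x}_k-\bm{y}_k\|^2$. I then bound the inner product by Young's inequality, $2\alpha_k\langle \bm{a},\bm{b}\rangle \leq \alpha_k^2\|\bm{a}\|^2 + \|\bm{b}\|^2$ (or a variant with different weights). This produces at most $2\alpha_k^2\|\nabla f_0(\bm{y}_k)\|^2$ in total, plus a term $c(\alpha_k)\|\bm{x}_k - \bm{y}_k\|^2$ with $c(\alpha_k) = 1+\alpha_k$.

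\textbf{Step 2 (absorb the residual using averagedness).} Next I use the exact identity for convex combinations,
\begin{align*}
\|\bm{y}_k - \bm{x}^\star\|^2 = \lambda\|\bm{x}_k - \bm{x}^\star\|^2 + (1-\lambda)\|\bm{z}_k - \bm{x}^\star\|^2 - \lambda(1-\lambda)\|\bm{x}_k - \bm{z}_k\|^2 .
\end{align*}
Combined with $\|\bm{x}_k - \bm{y}_k\|^2 = (1-\lambda)^2\|\bm{x}_k - \bm{z}_k\|^2$, this makes the negative term equal to $-\frac{\lambda}{1-\lambda}\|\bm{x}_k-\bm{y}_k\|^2$.

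Taking the conditional expectation $\mathbb{E}_{\bm{\xi}_k}[\cdot\,|\,\bm{\xi}_{[k-1]}]$, I apply \eqref{app_nonexp} of Proposition \ref{prop:1} with $\bm{y}=\bm{x}^\star=T(\bm{x}^\star)$, as in \eqref{ineq_prop}. This gives $\mathbb{E}\|\bm{z}_k - \bm{x}^\star\|^2 \leq \|\bm{x}_k-\bm{x}^\star\|^2 + \sigma^2/b_k$. Hence $\mathbb{E}\|\bm{y}_k-\bm{x}^\star\|^2 \leq \|\bm{x}_k-\bm{x}^\star\|^2 + (1-\lambda)\sigma^2/b_k - \frac{\lambda}{1-\lambda}\mathbb{E}\|\bm{x}_k-\bm{y}_k\|^2$, and $(1-\lambda)\sigma^2/b_k \leq \sigma^2/b_k$.

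\textbf{Step 3 (sign condition).} It remains to check that $c(\alpha_k) - \frac{\lambda}{1-\lambda} \leq 0$, so that all $\|\bm{x}_k-\bm{y}_k\|^2$ terms can be discarded. Since $\frac{\lambda}{1-\lambda} - 1 = \frac{2\lambda-1}{1-\lambda}$, the choice $c(\alpha_k)=1+\alpha_k$ requires $\alpha_k \leq \frac{2\lambda -1}{1-\lambda}$, which is implied by the hypothesis $\alpha_k \leq (2\lambda-1)/(2(1-\lambda))$. This step is also where $\lambda > 1/2$ is needed. The restriction $\lambda \leq 3/4$ only ensures that this bound does not exceed $1$, keeping $\alpha_k \in (0,1]$.

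Collecting the terms gives exactly the stated inequality, with the gradient term kept inside the conditional expectation.

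The main obstacle I expect is Step 1–3 bookkeeping: the Young weights must be chosen so that the coefficient of $\|\nabla f_0(\bm{y}_k)\|^2$ stays at most $2\alpha_k^2$ while the coefficient of $\|\bm{x}_k-\bm{y}_k\|^2$ stays at most $\lambda/(1-\lambda)$. If the weights above turn out too loose, I would rebalance them, for instance using $2\alpha_k\langle \bm{a},\bm{b}\rangle\leq 2\alpha_k^2\|\bm{a}\|^2+\tfrac12\|\bm{b}\|^2$. The factor $2$ in the hypothesis on $\alpha_k$ leaves room for such adjustments.
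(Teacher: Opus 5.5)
Your proof is correct, but it is organized differently from the paper's. You treat the update as a gradient step from $\bm{y}_k=T_{\bm{\xi}_k}^\lambda(\bm{x}_k)$ and get the negative residual from the exact identity for $\|\lambda\bm{a}+(1-\lambda)\bm{b}\|^2$.

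The paper instead works around $\bm{x}_k$. It rewrites $2\langle T_{\bm{\xi}_k}^\lambda(\bm{x}_k)-\bm{x}_k,\bm{x}_k-\bm{x}^\star\rangle$ through the polarization identity with $\bm{x}_{k+1}-\bm{x}_k$. It obtains $-(1-\lambda)\|T_{\bm{\xi}_k}(\bm{x}_k)-\bm{x}_k\|^2$ by polarizing $2\langle \bm{x}_k-T_{\bm{\xi}_k}(\bm{x}_k),\bm{x}_k-\bm{x}^\star\rangle$ and using the same application of Proposition~\ref{prop:1} that you use. It then splits $\|\bm{x}_{k+1}-\bm{x}_k\|^2\le 2\alpha_k^2\|\nabla f_0(\bm{y}_k)\|^2+2(1-\lambda)^2\|T_{\bm{\xi}_k}(\bm{x}_k)-\bm{x}_k\|^2$.

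The $\alpha_k$-free parts of the two residual coefficients coincide: both are $(1-\lambda)(1-2\lambda)$ in front of $\|T_{\bm{\xi}_k}(\bm{x}_k)-\bm{x}_k\|^2$. The difference lies in trading $f_0(\bm{y}_k)$ for $f_0(\bm{x}_k)$. The paper does this with the first-order convexity inequality at $\bm{x}_k$, Cauchy--Schwarz, and the $1$-Lipschitz continuity of $\nabla f_0$, which costs $2\alpha_k\|\bm{y}_k-\bm{x}_k\|^2$. Your exact quadratic expansion costs only $\alpha_k\|\bm{y}_k-\bm{x}_k\|^2$.

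Hence your residual coefficient is $(1-\lambda)\{(1-\lambda)\alpha_k+1-2\lambda\}$ rather than the paper's $(1-\lambda)\{2(1-\lambda)\alpha_k+1-2\lambda\}$. Your argument therefore needs only $\alpha_k\le(2\lambda-1)/(1-\lambda)$, twice the bound assumed in the lemma. The paper's route uses nothing beyond convexity and a Lipschitz gradient. Yours needs no more than that either if the quadratic identity is replaced by the descent lemma, and it keeps the factor-$2$ advantage.

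One minor point: the fallback weighting $2\alpha_k\langle\bm{a},\bm{b}\rangle\le 2\alpha_k^2\|\bm{a}\|^2+\tfrac12\|\bm{b}\|^2$ you mention would put $3\alpha_k^2$ in front of $\|\nabla f_0(\bm{y}_k)\|^2$ and miss the stated constant. You never need it, since your first choice of weights already closes the argument.
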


\begin{proof}
Let $\bm{x}^\star = P_{\mathrm{Fix}(T)}(\bm{x}_0)$. From \eqref{modified_H} and the relation $2 \langle \bm{x}, \bm{y} \rangle = \|\bm{x} + \bm{y}\|^2 - \|\bm{x}\|^2 - \|\bm{y}\|^2 $ $(\bm{x},\bm{y} \in \mathbb{R}^d)$, we have 
\begin{align}\label{S_1}
&2 \mathbb{E}_{\bm{\xi}_k} \left[ \left\langle T_{\bm{\xi}_k}^\lambda (\bm{x}_k) - \bm{x}_k, \bm{x}_k - \bm{x}^\star \right\rangle \Big| \bm{\xi}_{[k-1]} \right]\nonumber \\
&= 
2 \mathbb{E}_{\bm{\xi}_k} \left[ \left\langle \bm{x}_{k+1} - \bm{x}_k, \bm{x}_k - \bm{x}^\star \right\rangle \Big| \bm{\xi}_{[k-1]} \right]
+ 
2 \alpha_k \mathbb{E}_{\bm{\xi}_k} \left[ \left\langle \nabla f_0 \left( T_{\bm{\xi}_k}^\lambda (\bm{x}_k) \right), \bm{x}_k - \bm{x}^\star \right\rangle \Big| \bm{\xi}_{[k-1]} \right]\nonumber \\
&= 
\mathbb{E}_{\bm{\xi}_k} \left[ \| \bm{x}_{k+1} - \bm{x}^\star \|^2 \Big| \bm{\xi}_{[k-1]} \right]
- 
\mathbb{E}_{\bm{\xi}_k} \left[ \| \bm{x}_{k+1} - \bm{x}_k \|^2 \Big| \bm{\xi}_{[k-1]} \right]
- \| \bm{x}_{k} - \bm{x}^\star \|^2 \nonumber \\
&\quad 
+ 
2 \alpha_k \mathbb{E}_{\bm{\xi}_k} \left[ \left\langle \nabla f_0 \left( T_{\bm{\xi}_k}^\lambda (\bm{x}_k) \right), \bm{x}_k - \bm{x}^\star \right\rangle \Big| \bm{\xi}_{[k-1]} \right].
\end{align}
Meanwhile, from \eqref{S_k},
\begin{align*}
&2 \mathbb{E}_{\bm{\xi}_k} \left[ \left\langle  \bm{x}_k - T_{\bm{\xi}_k}^\lambda (\bm{x}_k), \bm{x}_k - \bm{x}^\star \right\rangle \Big| \bm{\xi}_{[k-1]} \right]
= 2 (1 - \lambda) \mathbb{E}_{\bm{\xi}_k} \left[ \left\langle  \bm{x}_k - T_{\bm{\xi}_k} (\bm{x}_k), \bm{x}_k - \bm{x}^\star \right\rangle \Big| \bm{\xi}_{[k-1]} \right].
\end{align*} 
From the relation $2 \langle \bm{x}, \bm{y} \rangle = \|\bm{x}\|^2 + \|\bm{y}\|^2 - \|\bm{x} - \bm{y}\|^2$ $(\bm{x},\bm{y} \in \mathbb{R}^d)$, we have
\begin{align*}
&2 \mathbb{E}_{\bm{\xi}_k} \left[ \left\langle  \bm{x}_k - T_{\bm{\xi}_k} (\bm{x}_k), \bm{x}_k - \bm{x}^\star \right\rangle \Big| \bm{\xi}_{[k-1]} \right]\\
&= 
\mathbb{E}_{\bm{\xi}_k} \left[ \left\| T_{\bm{\xi}_k} (\bm{x}_k) - \bm{x}_k \right\|^2 \Big| \bm{\xi}_{[k-1]} \right]
+ 
\| \bm{x}_{k} - \bm{x}^\star \|^2
- 
\mathbb{E}_{\bm{\xi}_k} \left[ \left\| T_{\bm{\xi}_k} (\bm{x}_k) - \bm{x}^\star \right\|^2 \Big| \bm{\xi}_{[k-1]} \right].
\end{align*}
Moreover, from \eqref{ineq_prop}, we also have 
\begin{align*}
\mathbb{E}_{\bm{\xi}_k} \left[ \left\| T_{\bm{\xi}_k} (\bm{x}_k) - \bm{x}^\star \right\|^2 \Big| \bm{\xi}_{[k-1]} \right]
\leq
\| \bm{x}_{k} - \bm{x}^\star \|^2
+ 
\frac{\sigma^2}{b_k},
\end{align*}
which implies that 
\begin{align}\label{S_2}
2 \mathbb{E}_{\bm{\xi}_k} \left[ \left\langle  \bm{x}_k - T_{\bm{\xi}_k}^\lambda (\bm{x}_k), \bm{x}_k - \bm{x}^\star \right\rangle \Big| \bm{\xi}_{[k-1]} \right]
\geq 
(1 - \lambda) 
\left\{ 
\mathbb{E}_{\bm{\xi}_k} \left[ \left\| T_{\bm{\xi}_k} (\bm{x}_k) - \bm{x}_k \right\|^2 \Big| \bm{\xi}_{[k-1]} \right]
- \frac{\sigma^2}{b_k}
\right\}.
\end{align}
Accordingly, from \eqref{S_1} and \eqref{S_2},
\begin{align*}
&- (1 - \lambda) \mathbb{E}_{\bm{\xi}_k} \left[ \left\| T_{\bm{\xi}_k} (\bm{x}_k) - \bm{x}_k \right\|^2 \Big| \bm{\xi}_{[k-1]} \right]
+ \frac{(1 - \lambda) \sigma^2}{b_k} \\
&\quad\geq 
\mathbb{E}_{\bm{\xi}_k} \left[ \| \bm{x}_{k+1} - \bm{x}^\star \|^2 \Big| \bm{\xi}_{[k-1]} \right]
- 
\mathbb{E}_{\bm{\xi}_k} \left[ \| \bm{x}_{k+1} - \bm{x}_k \|^2 \Big| \bm{\xi}_{[k-1]} \right]
- \| \bm{x}_{k} - \bm{x}^\star \|^2\\
&\quad 
+ 
2 \alpha_k \mathbb{E}_{\bm{\xi}_k} \left[ \left\langle \nabla f_0 \left( T_{\bm{\xi}_k}^\lambda (\bm{x}_k) \right), \bm{x}_k - \bm{x}^\star \right\rangle \Big| \bm{\xi}_{[k-1]} \right],
\end{align*}
which implies that 
\begin{align*}
&\mathbb{E}_{\bm{\xi}_k} \left[ \| \bm{x}_{k+1} - \bm{x}^\star \|^2 \Big| \bm{\xi}_{[k-1]} \right]\\
&\leq 
\| \bm{x}_{k} - \bm{x}^\star \|^2
+ 
\mathbb{E}_{\bm{\xi}_k} \left[ \| \bm{x}_{k+1} - \bm{x}_k \|^2 \Big| \bm{\xi}_{[k-1]} \right]
- 
2 \alpha_k \mathbb{E}_{\bm{\xi}_k} \left[ \left\langle \nabla f_0 \left( T_{\bm{\xi}_k}^\lambda (\bm{x}_k) \right), \bm{x}_k - \bm{x}^\star \right\rangle \Big| \bm{\xi}_{[k-1]} \right]\\
&\quad 
- (1 - \lambda) \mathbb{E}_{\bm{\xi}_k} \left[ \left\| T_{\bm{\xi}_k} (\bm{x}_k) - \bm{x}_k \right\|^2 \Big| \bm{\xi}_{[k-1]} \right]
+ 
\frac{\sigma^2}{b_k}.
\end{align*}
Since the relation $\|\bm{x} + \bm{y}\|^2 \leq 2 \|\bm{x}\|^2 + 2 \|\bm{y}\|^2$ $(\bm{x},\bm{y} \in \mathbb{R}^d)$, \eqref{S_k}, and \eqref{modified_H} lead to 
\begin{align*}
&\mathbb{E}_{\bm{\xi}_k} \left[ \| \bm{x}_{k+1} - \bm{x}_k \|^2 \Big| \bm{\xi}_{[k-1]} \right]\\
&\leq 
2 \mathbb{E}_{\bm{\xi}_k} \left[ \left\| \bm{x}_{k+1} - T_{\bm{\xi}_k}^\lambda (\bm{x}_k) \right\|^2 \Big| \bm{\xi}_{[k-1]} \right]
+ 
2 \mathbb{E}_{\bm{\xi}_k} \left[ \left\| T_{\bm{\xi}_k}^\lambda (\bm{x}_k) - \bm{x}_k \right\|^2 \Big| \bm{\xi}_{[k-1]} \right]\\
&= 
2 \alpha_k^2 \mathbb{E}_{\bm{\xi}_k} \left[ \left\| \nabla f_0 \left( T_{\bm{\xi}_k}^\lambda (\bm{x}_k) \right)  \right\|^2 \Big| \bm{\xi}_{[k-1]} \right]
+ 
2 (1-\lambda)^2 \mathbb{E}_{\bm{\xi}_k} \left[ \left\| T_{\bm{\xi}_k} (\bm{x}_k) - \bm{x}_k \right\|^2 \Big| \bm{\xi}_{[k-1]} \right],
\end{align*}
we have 
\begin{align*}
&\mathbb{E}_{\bm{\xi}_k} \left[ \| \bm{x}_{k+1} - \bm{x}^\star \|^2 \Big| \bm{\xi}_{[k-1]} \right]\\
&\leq 
\| \bm{x}_{k} - \bm{x}^\star \|^2
+ 
2 \alpha_k \underbrace{\mathbb{E}_{\bm{\xi}_k} \left[ \left\langle \nabla f_0 \left( T_{\bm{\xi}_k}^\lambda (\bm{x}_k) \right), \bm{x}^\star - \bm{x}_k \right\rangle \Big| \bm{\xi}_{[k-1]} \right]}_{G_k} + \frac{\sigma^2}{b_k}\\
&\quad 
+ 
2 \alpha_k^2 \mathbb{E}_{\bm{\xi}_k} \left[ \left\| \nabla f_0 \left( T_{\bm{\xi}_k}^\lambda (\bm{x}_k) \right)  \right\|^2 \Big| \bm{\xi}_{[k-1]} \right] 
+ (1 - \lambda) (1 - 2 \lambda)  \mathbb{E}_{\bm{\xi}_k} \left[ \left\| T_{\bm{\xi}_k} (\bm{x}_k) - \bm{x}_k \right\|^2 \Big| \bm{\xi}_{[k-1]} \right].
\end{align*}
The convexity of $f_0$ ensures that 
\begin{align*}
G_k
&=
\mathbb{E}_{\bm{\xi}_k} \left[ \left\langle \nabla f_0 \left( T_{\bm{\xi}_k}^\lambda (\bm{x}_k) \right), \bm{x}^\star - T_{\bm{\xi}_k}^\lambda (\bm{x}_k)  \right\rangle \Big| \bm{\xi}_{[k-1]} \right]\\
&\quad +
\mathbb{E}_{\bm{\xi}_k} \left[ \left\langle \nabla f_0 \left( T_{\bm{\xi}_k}^\lambda (\bm{x}_k) \right),  T_{\bm{\xi}_k}^\lambda (\bm{x}_k) - \bm{x}_k \right\rangle \Big| \bm{\xi}_{[k-1]} \right]\\
&\leq 
f_0^\star - f_0 \left( T_{\bm{\xi}_k}^\lambda (\bm{x}_k) \right)
+ 
\mathbb{E}_{\bm{\xi}_k} \left[ \left\langle \nabla f_0 \left( T_{\bm{\xi}_k}^\lambda (\bm{x}_k) \right),  T_{\bm{\xi}_k}^\lambda (\bm{x}_k) - \bm{x}_k \right\rangle \Big| \bm{\xi}_{[k-1]} \right]\\
&= 
f_0^\star - f_0 (\bm{x}_k) + f_0 (\bm{x}_k) - f_0 \left( T_{\bm{\xi}_k}^\lambda (\bm{x}_k) \right)
+ 
\mathbb{E}_{\bm{\xi}_k} \left[ \left\langle \nabla f_0 \left( T_{\bm{\xi}_k}^\lambda (\bm{x}_k) \right),  T_{\bm{\xi}_k}^\lambda (\bm{x}_k) - \bm{x}_k \right\rangle \Big| \bm{\xi}_{[k-1]} \right]\\
&\leq 
f_0^\star - f_0 (\bm{x}_k) 
-
\mathbb{E}_{\bm{\xi}_k} \left[ \left\langle \nabla f_0 ( \bm{x}_k),  T_{\bm{\xi}_k}^\lambda (\bm{x}_k) - \bm{x}_k \right\rangle \Big| \bm{\xi}_{[k-1]} \right]\\
&\quad 
+ 
\mathbb{E}_{\bm{\xi}_k} \left[ \left\langle \nabla f_0 \left( T_{\bm{\xi}_k}^\lambda (\bm{x}_k) \right),  T_{\bm{\xi}_k}^\lambda (\bm{x}_k) - \bm{x}_k \right\rangle \Big| \bm{\xi}_{[k-1]} \right]\\
&= 
f_0^\star - f_0 (\bm{x}_k)
+ 
\underbrace{\mathbb{E}_{\bm{\xi}_k} \left[ \left\langle \nabla f_0 \left( T_{\bm{\xi}_k}^\lambda (\bm{x}_k) \right) - \nabla f_0 ( \bm{x}_k),  T_{\bm{\xi}_k}^\lambda (\bm{x}_k) - \bm{x}_k \right\rangle \Big| \bm{\xi}_{[k-1]} \right]}_{G_{k,1}}.
\end{align*}
The Cauchy-Schwarz inequality, the $1$-Lipschitz continuity of $\nabla f_0$, and \eqref{S_k} ensure that
\begin{align*}
G_{k,1} 
&\leq 
\mathbb{E}_{\bm{\xi}_k} \left[ \left\| \nabla f_0 \left( T_{\bm{\xi}_k}^\lambda (\bm{x}_k) \right) - \nabla f_0 ( \bm{x}_k) \right\| \left\| T_{\bm{\xi}_k}^\lambda (\bm{x}_k) - \bm{x}_k \right\| \Big| \bm{\xi}_{[k-1]} \right]\\
&\leq
\mathbb{E}_{\bm{\xi}_k} \left[  \left\| T_{\bm{\xi}_k}^\lambda (\bm{x}_k) - \bm{x}_k \right\|^2 \Big| \bm{\xi}_{[k-1]} \right]
= 
(1 - \lambda)^2 \mathbb{E}_{\bm{\xi}_k} \left[  \left\| T_{\bm{\xi}_k} (\bm{x}_k) - \bm{x}_k \right\|^2 \Big| \bm{\xi}_{[k-1]} \right].
\end{align*}
Hence, 
\begin{align*}
&\mathbb{E}_{\bm{\xi}_k} \left[ \| \bm{x}_{k+1} - \bm{x}^\star \|^2 \Big| \bm{\xi}_{[k-1]} \right]\\
&\leq 
\| \bm{x}_{k} - \bm{x}^\star \|^2
+ 
2 \alpha_k \left\{   
f_0^\star - f_0 (\bm{x}_k)
+ 
(1 - \lambda)^2 \mathbb{E}_{\bm{\xi}_k} \left[  \left\| T_{\bm{\xi}_k} (\bm{x}_k) - \bm{x}_k \right\|^2 \Big| \bm{\xi}_{[k-1]} \right]
\right\} + \frac{\sigma^2}{b_k}\\
&\quad 
+ 
2 \alpha_k^2 \mathbb{E}_{\bm{\xi}_k} \left[ \left\| \nabla f_0 \left( T_{\bm{\xi}_k}^\lambda (\bm{x}_k) \right)  \right\|^2 \Big| \bm{\xi}_{[k-1]} \right] 
+ (1 - \lambda) (1 - 2 \lambda)  \mathbb{E}_{\bm{\xi}_k} \left[ \left\| T_{\bm{\xi}_k} (\bm{x}_k) - \bm{x}_k \right\|^2 \Big| \bm{\xi}_{[k-1]} \right]\\
&= 
\| \bm{x}_{k} - \bm{x}^\star \|^2
+ 
2 \alpha_k \left( f_0^\star - f_0 (\bm{x}_k)  \right)
+ 
2 \alpha_k^2 \mathbb{E}_{\bm{\xi}_k} \left[ \left\| \nabla f_0 \left( T_{\bm{\xi}_k}^\lambda (\bm{x}_k) \right)  \right\|^2 \Big| \bm{\xi}_{[k-1]} \right] + \frac{\sigma^2}{b_k}\\
&\quad 
+ 
(1 - \lambda) \left\{ 2 (1 - \lambda) \alpha_k + (1 - 2 \lambda) \right\}
\mathbb{E}_{\bm{\xi}_k} \left[ \left\| T_{\bm{\xi}_k} (\bm{x}_k) - \bm{x}_k \right\|^2 \Big| \bm{\xi}_{[k-1]} \right],
\end{align*}
which, together with $\alpha_k \leq (2 \lambda - 1)/(2(1-\lambda))$, leads to the assertion in Lemma \ref{lem:4}.
\end{proof}

Lemma \ref{lem:4} enables us to conduct the following convergence rate analysis on algorithm \eqref{modified_H}.

\begin{theorem}\label{thm:2}
Let $(\bm{x}_k)$ be the sequence generated by \eqref{modified_H} for Problem \ref{prob:1} with $(\alpha_k)$ and $(b_k)$ satisfying 
\begin{align*}
\frac{1}{b_k} \leq \alpha_k \leq \frac{2 \lambda - 1}{2 (1-\lambda)} \text{ } (k\in \{0\} \cup \mathbb{N}), 
\text{ }
\sum_{k=0}^{+\infty} \alpha_k = + \infty,
\text{ and }
\sum_{k=0}^{+\infty} \frac{1}{b_k} < + \infty.
\end{align*}
Then, for all $K \in \mathbb{N}$,
\begin{align*}
\min_{k \in [0:K-1]} \mathbb{E} \left[ f_0 (\bm{x}_k) \right]
&\leq
\frac{1}{\sum_{k=0}^{K-1} \alpha_k} \sum_{k=0}^{K-1} \alpha_k \mathbb{E}[ f_0 (\bm{x}_k)]\\
&\leq
f_0^\star
+
\frac{1}{2 \sum_{k=0}^{K-1} \alpha_k}
\left( \| \bm{x}_0 - \bm{x}^\star \|^2
+
M_3 \sum_{k=0}^{K-1} \alpha_k^2
+
\sigma^2 \sum_{k=0}^{K-1} \frac{1}{b_k}
\right)\\
&= 
f_0^\star 
+
\begin{dcases}
O \left( \frac{\sum_{k=0}^{K-1} \alpha_k^2}{\sum_{k=0}^{K-1} \alpha_k} \right) 
&\text{ } \left(  \sum_{k=0}^{K-1} \alpha_k^2 = o \left( \sum_{k=0}^{K-1} \alpha_k  \right) \right)\\
O \left( \frac{1}{\sum_{k=0}^{K-1} \alpha_k} \right)
&\text{ } \left( \sum_{k=0}^{+ \infty} \alpha_k^2 < + \infty \right), 
\end{dcases}
\end{align*}
where $M_3 \coloneqq 4 \{(M + \sigma^2) + \| \nabla f_0 (\bm{x}^\star)\|^2\}$ and $M$ is defined as in Lemma \ref{lem:1}.
\end{theorem}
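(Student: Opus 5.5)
The plan is to take total expectation in Lemma \ref{lem:4}, telescope over $k = 0, \dots, K-1$, and then bound the gradient-norm term uniformly by $M_3/2$ using Lemma \ref{lem:1}.

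\textbf{Step 1 (one-step recursion in total expectation).} Apply $\mathbb{E}$ to the inequality of Lemma \ref{lem:4}; this is allowed because $\alpha_k \leq (2\lambda-1)/(2(1-\lambda))$. Rearranging gives
\begin{align*}
2\alpha_k \left(\mathbb{E}[f_0(\bm{x}_k)] - f_0^\star\right)
\leq \mathbb{E}\|\bm{x}_k - \bm{x}^\star\|^2 - \mathbb{E}\|\bm{x}_{k+1}-\bm{x}^\star\|^2
+ 2\alpha_k^2 \mathbb{E}\left[\left\|\nabla f_0\left(T^\lambda_{\bm{\xi}_k}(\bm{x}_k)\right)\right\|^2\right] + \frac{\sigma^2}{b_k}.
\end{align*}

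\textbf{Step 2 (the gradient term, which is the main obstacle).} We need $\mathbb{E}\|\nabla f_0(T^\lambda_{\bm{\xi}_k}(\bm{x}_k))\|^2 \leq M_3/2$. Write
\begin{align*}
\nabla f_0(\bm{y}) = (\bm{y} - \bm{x}^\star) + \nabla f_0(\bm{x}^\star),
\end{align*}
and use $\|\bm{a}+\bm{b}\|^2 \leq 2\|\bm{a}\|^2 + 2\|\bm{b}\|^2$. This gives the bound $2\,\mathbb{E}\|T^\lambda_{\bm{\xi}_k}(\bm{x}_k)-\bm{x}^\star\|^2 + 2\|\nabla f_0(\bm{x}^\star)\|^2$.

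To control the first piece, note that algorithm \eqref{modified_H} is Algorithm \ref{algo:1} with $T_{\bm{\xi}_k}$ replaced by $T^\lambda_{\bm{\xi}_k}$. By (A1)', (A2)' and (A3)', Lemma \ref{lem:1} applies to it, using the hypothesis $1/b_k \leq \alpha_k$ in place of \eqref{conditions_b_a_1}. Hence $\mathbb{E}\|T^\lambda_{\bm{\xi}_k}(\bm{x}_k) - \bm{x}^\star\|^2 \leq M+\sigma^2$, with the variance bound $(1-\lambda)^2\sigma^2 \leq \sigma^2$ from (A2)'. Alternatively, one can check directly that $\|T^\lambda_{\bm{\xi}_k}(\bm{x}_k)-\bm{x}^\star\|^2 \leq \lambda\|\bm{x}_k-\bm{x}^\star\|^2 + (1-\lambda)\|T_{\bm{\xi}_k}(\bm{x}_k)-\bm{x}^\star\|^2$ and use both bounds of Lemma \ref{lem:1}. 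Either way,
\begin{align*}
2\,\mathbb{E}\left\|\nabla f_0\left(T^\lambda_{\bm{\xi}_k}(\bm{x}_k)\right)\right\|^2 \leq 4\left\{(M+\sigma^2) + \|\nabla f_0(\bm{x}^\star)\|^2\right\} = M_3.
\end{align*}
The care needed here is to ensure that the constant $M$ of Lemma \ref{lem:1} is the one for the modified iteration. Since $M \geq \|\bm{x}_0 - \bm{x}^\star\|^2 + \sigma^2$ works in both cases, this is harmless.

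\textbf{Step 3 (telescoping).} Sum the inequality of Step 1 over $k = 0, \dots, K-1$. The distance terms telescope, and we drop $-\mathbb{E}\|\bm{x}_K-\bm{x}^\star\|^2 \leq 0$. Dividing by $2\sum_{k=0}^{K-1}\alpha_k$ yields the second inequality of the theorem. The first inequality holds because a minimum is at most any weighted average with nonnegative weights.

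\textbf{Step 4 (rates).} Since $\sum_k 1/b_k < +\infty$, the quantity $\|\bm{x}_0-\bm{x}^\star\|^2 + \sigma^2\sum_k 1/b_k$ is bounded by a constant $C$. The error term is therefore at most
\begin{align*}
\frac{C + M_3\sum_{k=0}^{K-1}\alpha_k^2}{2\sum_{k=0}^{K-1}\alpha_k}.
\end{align*}
If $\sum_{k=0}^{+\infty}\alpha_k^2 < +\infty$, this is $O(1/\sum_{k=0}^{K-1}\alpha_k)$. Otherwise $\sum_{k=0}^{K-1}\alpha_k^2 \to +\infty$, so eventually it dominates $C$, and the bound is $O(\sum_{k=0}^{K-1}\alpha_k^2/\sum_{k=0}^{K-1}\alpha_k)$. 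This ratio tends to $0$ under the $o(\cdot)$ hypothesis.
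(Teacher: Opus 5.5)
Your proposal is correct and follows essentially the same route as the paper: take expectations in Lemma~\ref{lem:4}, bound the gradient term by $M_3/2$ via $\nabla f_0(\bm{y}) = (\bm{y}-\bm{x}^\star) + \nabla f_0(\bm{x}^\star)$ and the Lemma~\ref{lem:1}-type bound $\mathbb{E}[\|T^\lambda_{\bm{\xi}_k}(\bm{x}_k)-\bm{x}^\star\|^2] \leq M+\sigma^2$ for the modified iteration, then telescope and divide by $2\sum_{k=0}^{K-1}\alpha_k$. Working in total expectation throughout is in fact slightly cleaner than the paper, which states the gradient bound conditionally on $\bm{\xi}_{[k-1]}$ even though only the total-expectation bound $M+\sigma^2$ has been established.
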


\begin{proof}
Let $\bm{x}^\star = P_{\mathrm{Fix}(T)} (\bm{x}_0)$. From the relation $\|\bm{x} + \bm{y}\|^2 \leq 2 \|\bm{x}\|^2 + 2 \|\bm{y}\|^2$ $(\bm{x},\bm{y} \in \mathbb{R}^d)$, we have
\begin{align*}
&\mathbb{E}_{\bm{\xi}_k} \left[ \left\| \nabla f_0 \left( T_{\bm{\xi}_k}^\lambda (\bm{x}_k) \right)  \right\|^2 \Big| \bm{\xi}_{[k-1]} \right]\\
&\leq 
2 \mathbb{E}_{\bm{\xi}_k} \left[ \left\| \nabla f_0 \left( T_{\bm{\xi}_k}^\lambda (\bm{x}_k) \right) - \nabla f_0 (\bm{x}^\star)  \right\|^2 \Big| \bm{\xi}_{[k-1]} \right]
+ 
2 \left\| \nabla f_0 (\bm{x}^\star) \right\|^2,
\end{align*}
which, together with the $1$-Lipschitz continuity of $\nabla f_0$, implies that
\begin{align*}
&\mathbb{E}_{\bm{\xi}_k} \left[ \left\| \nabla f_0 \left( T_{\bm{\xi}_k}^\lambda (\bm{x}_k) \right)  \right\|^2 \Big| \bm{\xi}_{[k-1]} \right]
\leq 
2 \mathbb{E}_{\bm{\xi}_k} \left[ \left\|  T_{\bm{\xi}_k}^\lambda (\bm{x}_k)  - \bm{x}^\star  \right\|^2 \Big| \bm{\xi}_{[k-1]} \right]
+ 
2 \left\| \nabla f_0 (\bm{x}^\star) \right\|^2.
\end{align*}
A similar argument to the one proving Lemma \ref{lem:1} leads to the finding that algorithm \eqref{modified_H} with $1/b_k \leq \alpha_k$ $(k\in \{0\} \cup \mathbb{N})$ satisfies 
\begin{align*}
\mathbb{E} \left[ \left\|  T_{\bm{\xi}_k}^\lambda (\bm{x}_k)  - \bm{x}^\star  \right\|^2   \right]
\leq M + \sigma^2.
\end{align*}
Accordingly, we have 
\begin{align*}
\mathbb{E}_{\bm{\xi}_k} \left[ \left\| \nabla f_0 \left( T_{\bm{\xi}_k}^\lambda (\bm{x}_k) \right)  \right\|^2 \Big| \bm{\xi}_{[k-1]} \right]
\leq 
2 \left(M + \sigma^2 \right) + 2 \left\| \nabla f_0 (\bm{x}^\star) \right\|^2
\eqqcolon \frac{M_3}{2}.
\end{align*}
Lemma \ref{lem:4} thus ensures that, for all $k \in \{0\} \cup \mathbb{N}$, 
\begin{align*}
2 \alpha_k \mathbb{E}[ f_0 (\bm{x}_k) - f_0^\star ]
\leq 
\mathbb{E} \left[ \| \bm{x}_k - \bm{x}^\star \|^2  \right]
- 
\mathbb{E} \left[ \| \bm{x}_{k+1} - \bm{x}^\star \|^2  \right]
+ 
M_3 \alpha_k^2 + \frac{\sigma^2}{b_k}.
\end{align*}
Let $K \in \mathbb{N}$. Summing the above inequality from $k=0$ to $K-1$ guarantees that
\begin{align*}
2 \sum_{k=0}^{K-1} \alpha_k \mathbb{E}[ f_0 (\bm{x}_k)] - f_0^\star
\leq
\| \bm{x}_0 - \bm{x}^\star \|^2 
- 
\mathbb{E} \left[ \| \bm{x}_{K} - \bm{x}^\star \|^2  \right]
+ 
M_3 \sum_{k=0}^{K-1} \alpha_k^2
+
\sigma^2 \sum_{k=0}^{K-1} \frac{1}{b_k},
\end{align*}
which, together with $\mathbb{E} [ \| \bm{x}_{K} - \bm{x}^\star \|^2 ] \geq 0$, implies that
\begin{align*}
\min_{k \in [0:K-1]} \mathbb{E}[ f_0 (\bm{x}_k)]
&\leq
\frac{1}{\sum_{k=0}^{K-1} \alpha_k} \sum_{k=0}^{K-1} \alpha_k \mathbb{E}[ f_0 (\bm{x}_k)]\\
&\leq
f_0^\star 
+
\frac{\| \bm{x}_0 - \bm{x}^\star \|^2}{2 \sum_{k=0}^{K-1} \alpha_k}
+
\frac{M_3 \sum_{k=0}^{K-1} \alpha_k^2}{2 \sum_{k=0}^{K-1} \alpha_k}
+
\frac{\sigma^2 \sum_{k=0}^{K-1} b_k^{-1}}{2 \sum_{k=0}^{K-1} \alpha_k}.
\end{align*}
This completes the proof.
\end{proof}

\subsubsection{Concrete convergence rates}
\label{sec:3.2.1}
The diminishing step size $\alpha_k$ defined by \eqref{diminishing_1} may be modified to 
\begin{align}\label{modifined_a}
\alpha_k = \frac{2 \lambda - 1}{2(1-\lambda) (k+1)^a} \leq \frac{2 \lambda - 1}{2(1-\lambda)},
\end{align}
where $a \in (0,1)$. From \eqref{diminishing}, $\sum_{k=0}^{+\infty} \alpha_k = + \infty$ holds. Using an increasing batch size $b_k$ defined by \eqref{increasing} satisfies $1/b_k \leq \alpha_k$ for a sufficiently large $k$. Moreover, 
\begin{align}\label{diminishing_0}
\sum_{k=0}^{K-1} \alpha_k^2
\leq
\frac{(2 \lambda - 1)^2}{2^2 (1-\lambda)^2}
\times
\begin{dcases}
\frac{K^{1 - 2a}}{1 - 2a} &\text{ } \left( a \in \left(0,\frac{1}{2} \right) \right)\\
1 + \log K  &\text{ } \left( a  = \frac{1}{2} \right)\\
\frac{2a}{2a - 1}  &\text{ } \left( a \in \left(\frac{1}{2},1 \right)\right).
\end{dcases}
\end{align}
Theorem \ref{thm:2} with \eqref{diminishing} and \eqref{diminishing_0} thus indicates that algorithm \eqref{modified_H} using a diminishing step size defined by \eqref{modifined_a} and an increasing batch size defined by \eqref{increasing} has the following convergence rate:
\begin{align}\label{rate}
\min_{k \in [0:K-1]} \mathbb{E}[ f_0 (\bm{x}_k)]
= 
f_0^\star
+
\begin{dcases}
O \left( \frac{1}{K^a} \right) &\text{ } \left( a \in \left(0,\frac{1}{2} \right) \right)\\
O \left( \frac{\log K}{\sqrt{K}}  \right) &\text{ } \left( a = \frac{1}{2} \right)\\
O \left( \frac{1}{K^{1-a}} \right) &\text{ } \left( a \in \left(\frac{1}{2}, 1 \right) \right)\\
O \left( \frac{1}{\log K} \right) &\text{ } \left( a = 1 \right). 
\end{dcases}
\end{align} 

When we use a diminishing step size $\alpha_k$, we cannot use a constant batch size $b_k = b$, since $\alpha_k$ satisfying $1/b_k = 1/b \leq \alpha_k$ does not converge to $0$. Hence, Theorem \ref{thm:2} says that the use of constant batch sizes does not guarantee convergence of $\min_{k \in [0:K-1]} f_0 (\bm{x}_k)$ to $f_0^\star$, as is evident as well from Theorem \ref{thm:1}.

\section{Numerical Examples}\label{sec:4}
We considered the ERM problem to train Residual Network (ResNet-18) \cite{he2016} on the CIFAR100 dataset \cite{krizhevsky2009learning} ($n = 50,000$ and $d \approx 11.2$M; see also Section \ref{sec:1.3.1} for ERM problems in deep neural networks).
The experimental environment was two NVIDIA GeForce RTX 4090 GPUs and Intel Core i9 13900KF CPU. The software environment was Python 3.10.12, PyTorch 2.1.0, and CUDA 12.2. 

The standard algorithm for the ERM problem is SGD defined by \eqref{SGD_0}, i.e., 
\begin{align}\label{SGD_1}
\begin{split}
\bm{x}_{k+1}
&= T_{\bm{\xi}_k} (\bm{x}_k)
= \frac{1}{b_k} \sum_{i=1}^{b_k} T_{\xi_{k,i}} (\bm{x}_k) 
\coloneqq
\frac{1}{b_k} \sum_{i=1}^{b_k} 
\underbrace{\left( \mathrm{Id} - \eta \nabla f_{\xi_{k,i}} \right)}_{T_{\xi_{k,i}}} (\bm{x}_k)\\
&= \bm{x}_k - \frac{\eta}{b_k} \sum_{i=1}^{b_k} \nabla f_{\xi_{k,i}} (\bm{x}_k) 
= \bm{x}_k - \eta \nabla f_{\bm{\xi}_k}(\bm{x}_k),
\end{split}
\end{align}
where $\eta = 0.1$, $b = 2^7$, $b_0 = 2^3$, $\delta = 2$, $E = 30$, and  
\begin{align*}
b_k = 
\begin{dcases}
\underbrace{b_0, \cdots, b_0}_{E \text{ epochs}}, 
\underbrace{b_0 \delta, \cdots, b_0 \delta}_{E \text{ epochs}},
\underbrace{b_0 \delta^2, \cdots, b_0 \delta^2}_{E \text{ epochs}},
\cdots
&\text{ (Exponential increasing batch size)}\\
b &\text{ (Constant batch size)} 
\end{dcases}
\end{align*}
were used in the experiment.
Although the use of constant batch sizes does not always guarantee convergence of SGD \eqref{SGD_1} in theory (see, e.g., \cite[Section 3.1]{umeda2025increasing} and also the discussion in \eqref{increasing}), 
constant batch sizes are well used in practice.
The exponential increasing batch size \eqref{increasing} guarantees convergence of SGD \eqref{SGD_1} in theory (see, e.g., \cite[Section 3.2]{umeda2025increasing} and also the discussion in \eqref{increasing}).

We used the following algorithms to train ResNet-18 on the CIFAR100 dataset:\\
\textbf{[Krasnosel'ski\u\i-Mann]} 
The mini-batch stochastic Krasnosel'ski\u\i-Mann algorithm \eqref{K_M_1}
with $T_{\bm{\xi}_k} (\bm{x}_k)$ in \eqref{SGD_1} and $\alpha_k = 1/2$:
\begin{align}\label{KM_SGD}
\bm{x}_{k+1} 
= \frac{1}{2} \bm{x}_k + \frac{1}{2} T_{\bm{\xi}_k} (\bm{x}_k)
= \frac{1}{2} \bm{x}_k + \frac{1}{2} \left( \bm{x}_k - \eta \nabla f_{\bm{\xi}_k} (\bm{x}_k) \right)
= \bm{x}_k - \frac{\eta}{2} \nabla f_{\bm{\xi}_k} (\bm{x}_k).
\end{align}
\textbf{[Halpern]} 
The mini-batch stochastic Halpern algorithm (Algorithm \ref{algo:1})
with $T_{\bm{\xi}_k} (\bm{x}_k)$ in \eqref{SGD_1}, $\bm{x}_0 = \bm{0}$,
$c = 10^{-3}$, and $\alpha_k = c/(k +1)^a$ $(a = 1/4, 1/2, 3/4, 1)$:
\begin{align}\label{H_SGD}
\begin{split}
\bm{x}_{k+1} 
&= \frac{c}{(k +1)^a} \bm{x}_0 + \left( 1 - \frac{c}{(k +1)^a}  \right) T_{\bm{\xi}_k} (\bm{x}_k)\\
&= \left( 1 - \frac{c}{(k +1)^a}  \right) \left( \bm{x}_k - \eta \nabla f_{\bm{\xi}_k} (\bm{x}_k) \right)\\
&= \left( 1 - \frac{c}{(k +1)^a}  \right) \bm{x}_k -  \eta \left( 1 - \frac{c}{(k +1)^a}  \right) \nabla f_{\bm{\xi}_k} (\bm{x}_k).
\end{split}
\end{align}
We can check that Algorithm \eqref{KM_SGD} is SGD with a step size $\eta/2$, while the update rule of Algorithm \eqref{H_SGD} closely resembles that of SGD with weight decay \cite[Algorithm 1]{loshchilov2018decoupled} defined by 
$\bm{x}_{k+1} = (1 - c \eta_k) \bm{x}_k - \eta_k \nabla f_{\bm{\xi}_k} (\bm{x}_k)$. 
Since weight decay is commonly employed to improve generalization performance (i.e., the test accuracies), incorporating weight decay into Algorithm \eqref{H_SGD} is expected to improve its generalization performance over that of Algorithm \eqref{KM_SGD}.

\begin{minipage}[t]{0.49\linewidth}
\begin{figure}[H]
 \centering
 \begin{minipage}{1.0\linewidth}
 \centering
 \includegraphics[width=\linewidth]{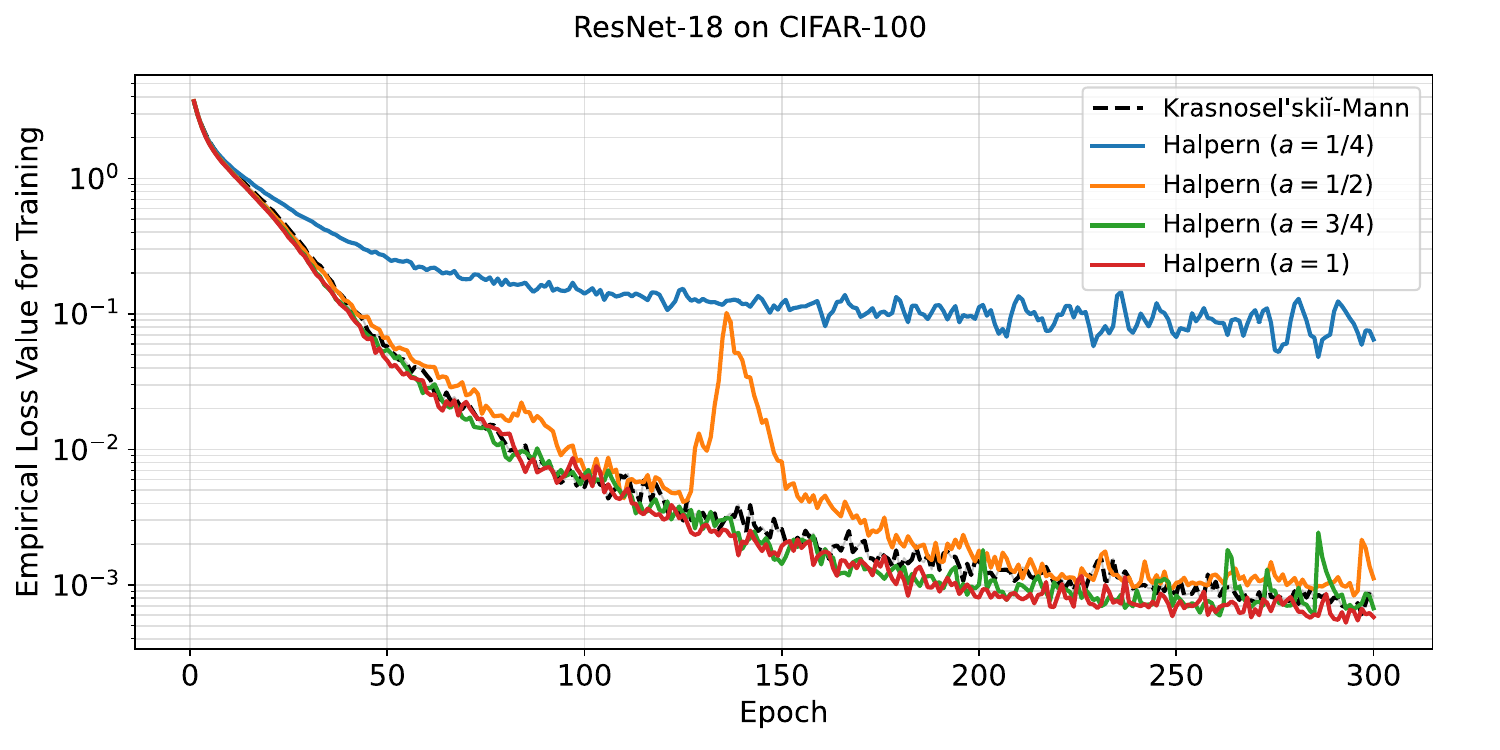}
 \end{minipage}
 \begin{minipage}{1.0\linewidth}
 \centering
 \includegraphics[width=\linewidth]{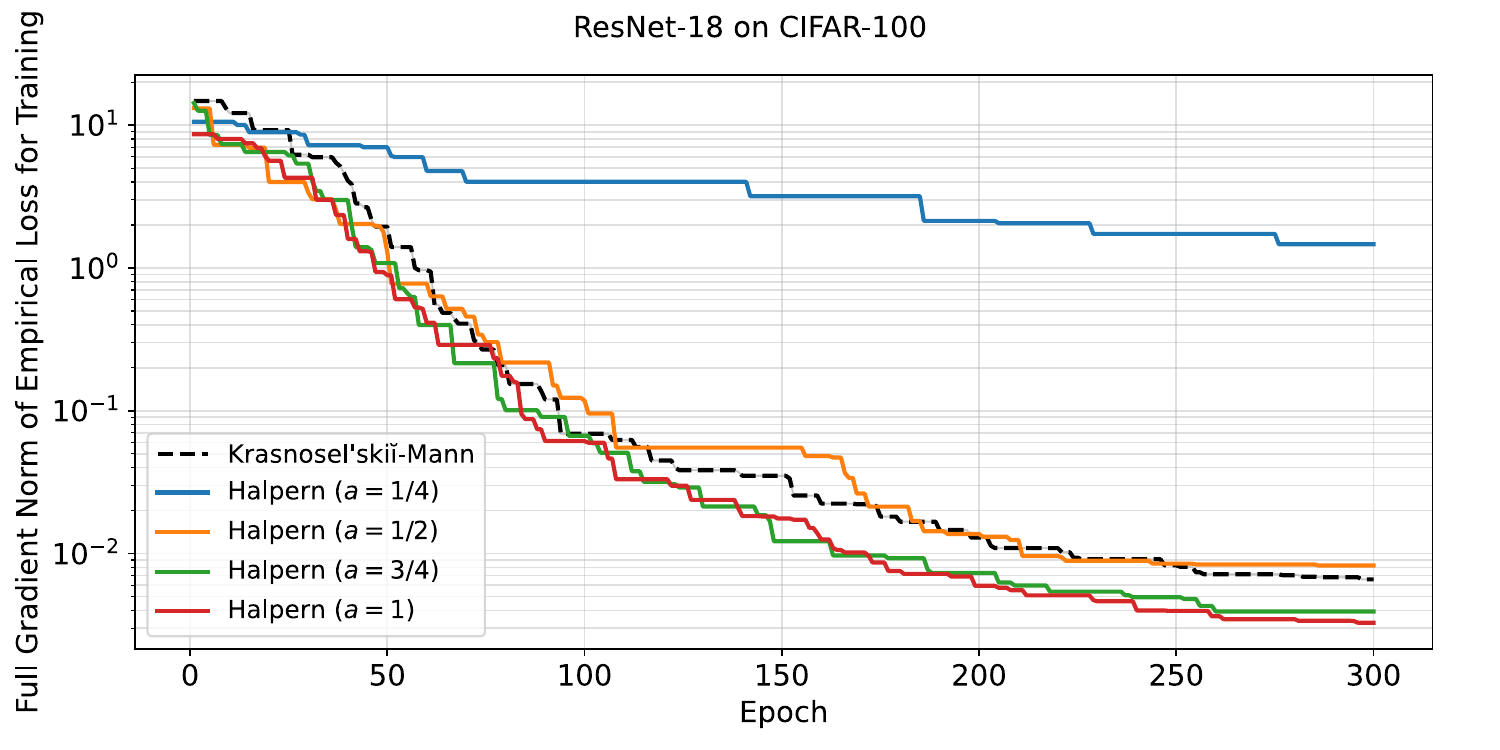}
 \end{minipage}
 \begin{minipage}{1.0\linewidth}
 \centering
 \includegraphics[width=\linewidth]{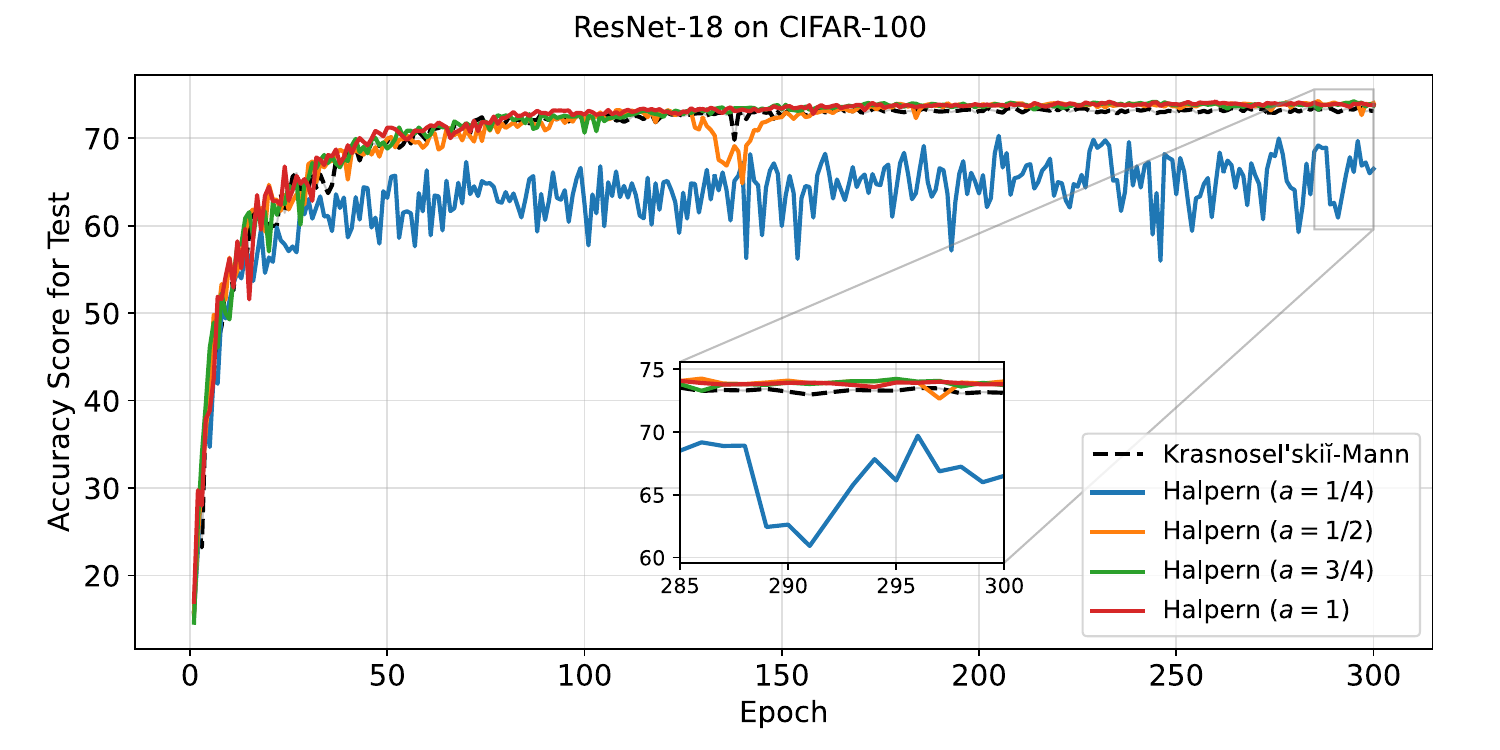}
 \end{minipage}
 \caption{
Empirical loss value, full gradient norm of empirical loss, and  accuracy score in testing for Algorithm \eqref{KM_SGD} ([Krasnosel'ski\u\i-Mann]) and Algorithm \eqref{H_SGD} ([Halpern] $(a = 1/4, 1/2, 3/4, 1)$) to train ResNet-18 on CIFAR100 dataset when batch size is constant $(b_k = b)$}
 \label{fig:constant_batch}
\end{figure}
\end{minipage}
\hfill
\begin{minipage}[t]{0.49\linewidth}
\begin{figure}[H]
 \centering
 \begin{minipage}{1.0\linewidth}
 \centering
 \includegraphics[width=\linewidth]{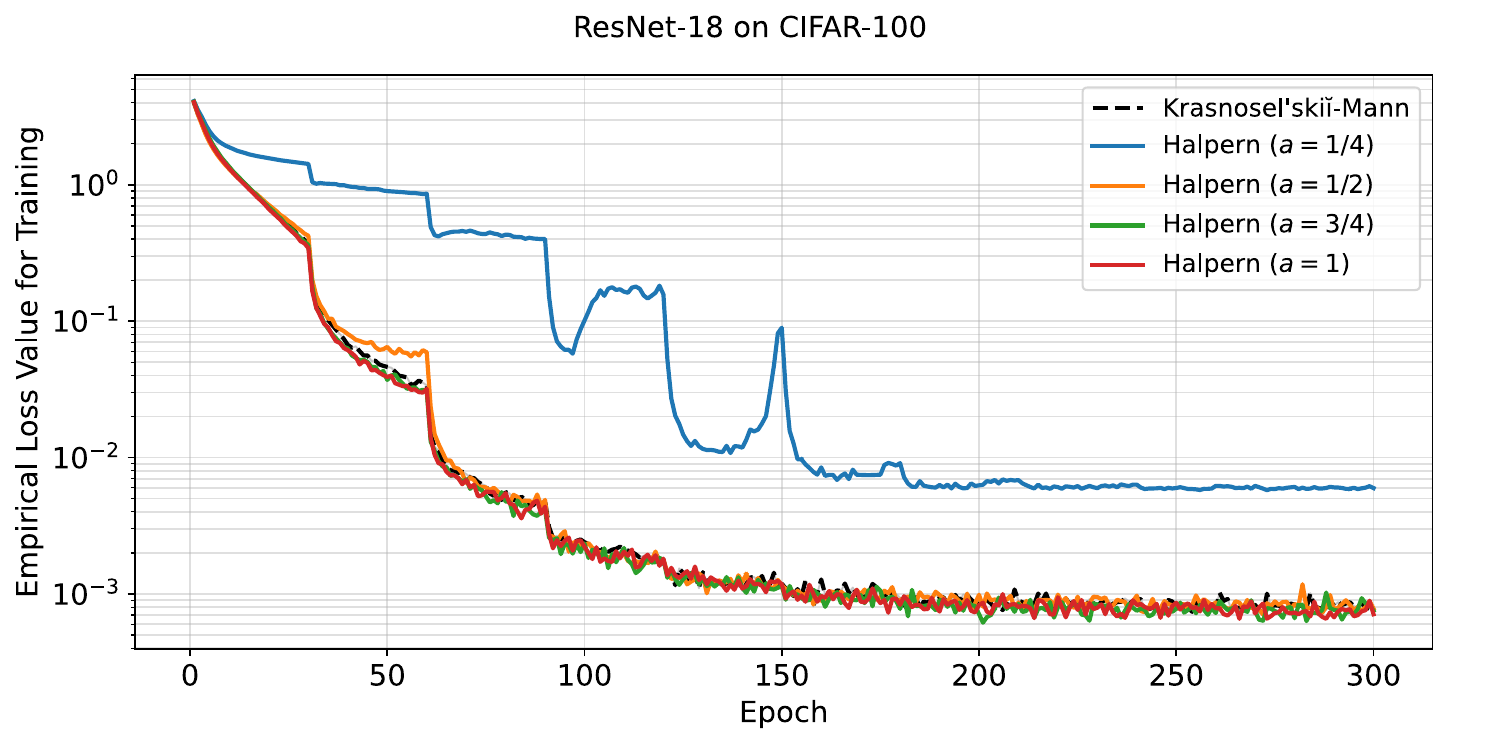}
 \end{minipage}
 \begin{minipage}{1.0\linewidth}
 \centering
 \includegraphics[width=\linewidth]{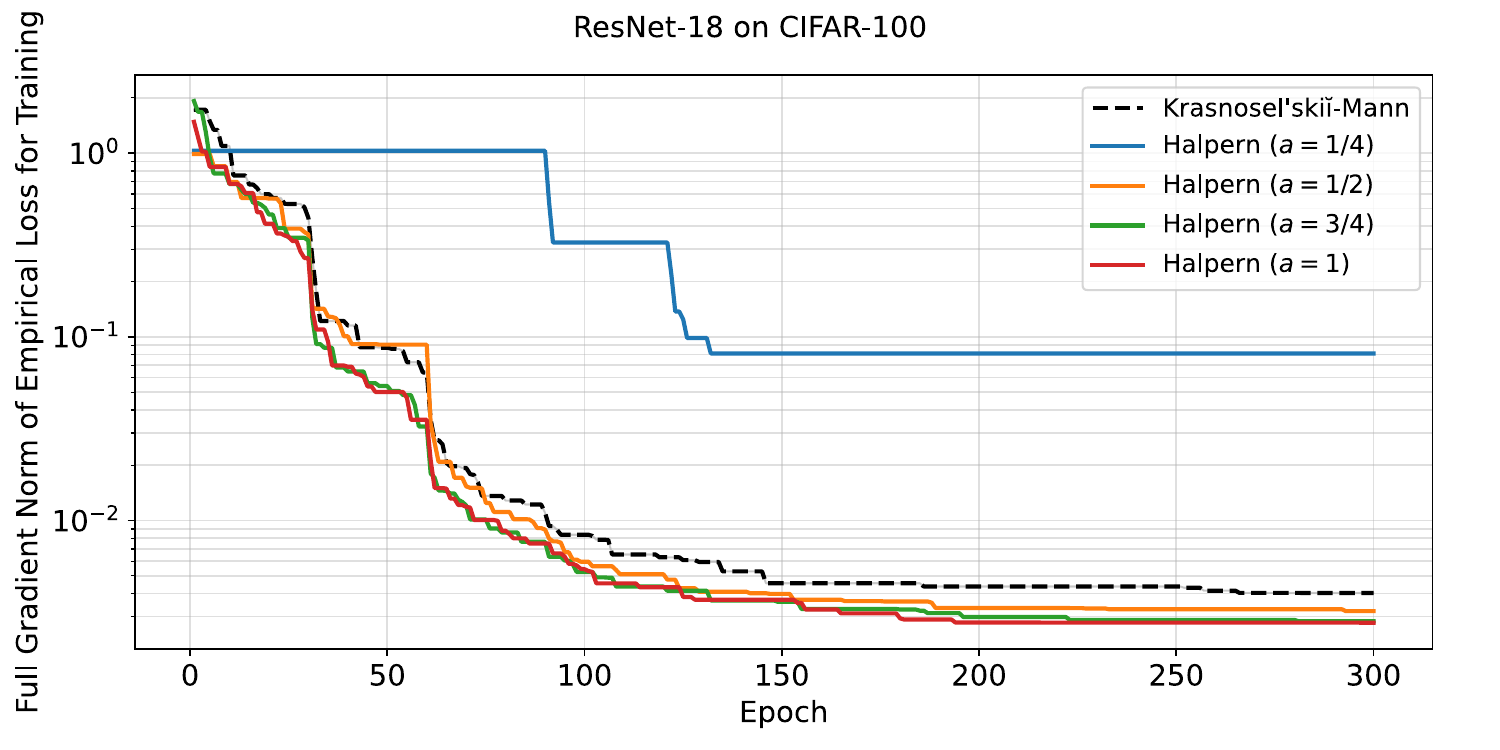}
 \end{minipage}
 \begin{minipage}{1.0\linewidth}
 \centering
 \includegraphics[width=\linewidth]{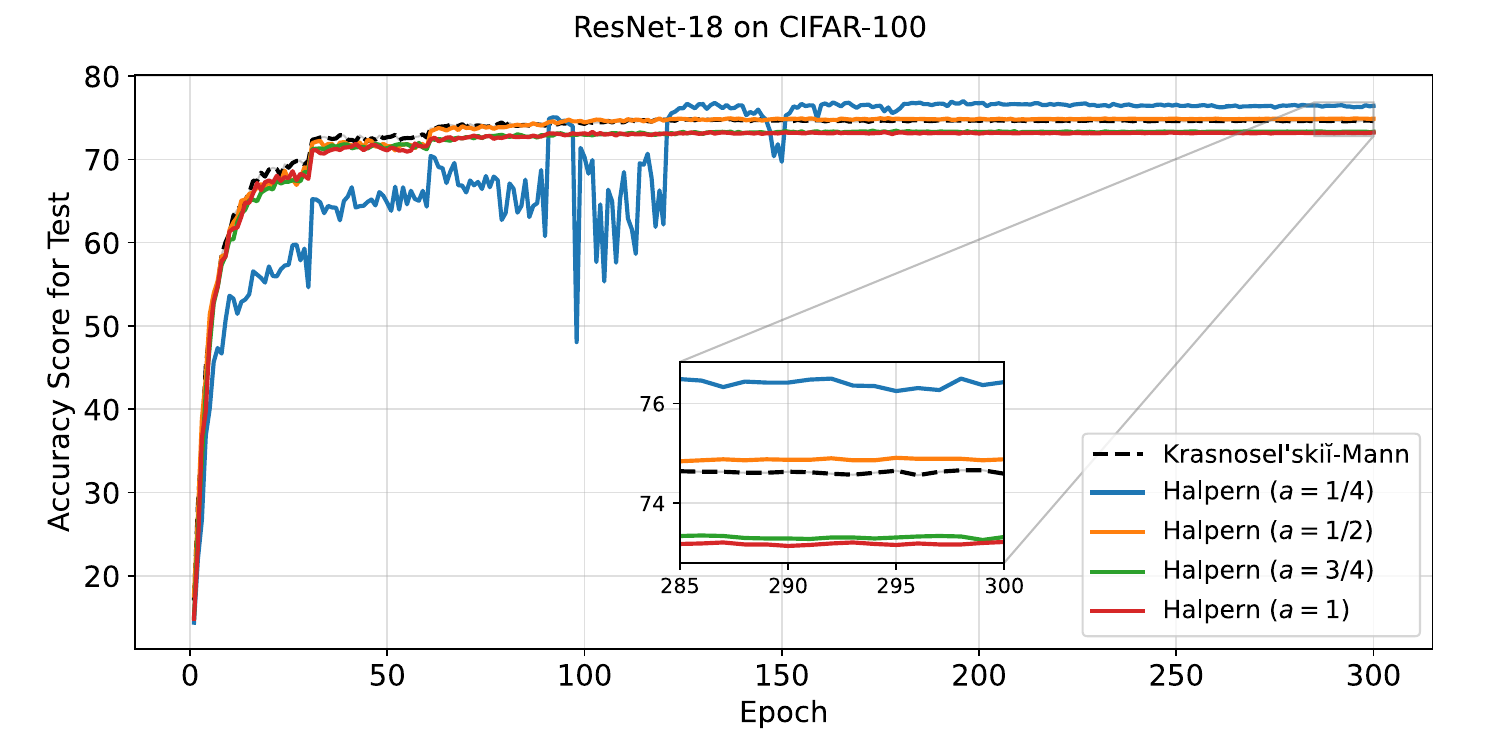}
 \end{minipage}
 \caption{Empirical loss value, full gradient norm of empirical loss, and  accuracy score in testing for Algorithm \eqref{KM_SGD} ([Krasnosel'ski\u\i-Mann]) and Algorithm \eqref{H_SGD} ([Halpern] $(a = 1/4, 1/2, 3/4, 1)$) to train ResNet-18 on CIFAR100 dataset when batch size is increasing $(b_k = b_0 \delta^k)$}
 \label{fig:increasing_batch}
\end{figure}
\end{minipage}

We evaluated three performance measures, 
the vale of the empirical loss $f(\bm{x}_k)$, 
the minimum of the full gradient norms $\min_{k \in [0:K-1]} \|\nabla f(\bm{x}_k)\|$, and the test accuracy.
If $f(\bm{x}_k)$ and $\|\nabla f(\bm{x}_k)\|$ decrease when $k$ increases, then $(\bm{x}_k)$ converges to a minimizer of $f$.
That is, the algorithm generating the sequence $(\bm{x}_k)$ can train ResNet-18 on the CIFAR100 dataset accordingly.
In machine learning, the test accuracy defined by 
((the number of correctly classified test samples)/(the total number of test samples)) $\times 100 
=$
((the number of correctly classified test samples)/10,000) $\times 100$
is a key metric for evaluating the generalization performance of the deep neural network model.

Figure \ref{fig:constant_batch} shows the behavior of Algorithm \eqref{KM_SGD} ([Krasnosel'ski\u\i--Mann]) and Algorithm \eqref{H_SGD} ([Halpern] with $a=1/4,1/2,3/4,1$) when the batch size is fixed at $b_k = b = 2^7$.
Although a constant batch size $b_k = b = 2^7$ does not guarantee the convergence of either algorithm in theory, Figure \ref{fig:constant_batch} shows that all algorithms except [Halpern] ($a=1/4$) steadily decreased both $f(\bm{x}_k)$ and $\|\nabla f (\bm{x}_k)\|$, while achieving high test accuracies.
In particular, [Halpern] ($a=1/2,3/4,1$) consistently achieved higher test accuracies than [Krasnosel'ski\u\--Mann].

Figure \ref{fig:increasing_batch} shows the behavior of Algorithm \eqref{KM_SGD} ([Krasnosel'ski\u\i--Mann]) and Algorithm \eqref{H_SGD} ([Halpern] with $a=1/4,1/2,3/4,1$) when the batch size increases exponentially, that is, $b_k$ is doubled every $E$ epochs.
A comparison of Figures \ref{fig:constant_batch} and \ref{fig:increasing_batch} shows that using the exponentially increasing batch size enabled the algorithms to decrease both $f(\bm{x}_k)$ and $\|\nabla f (\bm{x}_k)\|$ more rapidly than using the constant batch size $b_k = b$.
These results suggest that employing an increasing batch-size strategy is desirable not only from a theoretical standpoint, where it guarantees convergence, but also from a practical perspective because it accelerates optimization.
Moreover, Figure \ref{fig:increasing_batch} shows that [Halpern] ($a=1/4, 1/2$) achieved higher test accuracies than [Krasnosel'ski\u\i--Mann].

\section{Conclusion and Future Work}
In the present study, a stochastic fixed point problem for nonexpansive mappings was considered and a convergence analysis of the mini-batch stochastic Halpern algorithm for solving this problem was performed. It was shown that, for a decreasing step size and increasing batch size, the algorithm could achieve mean-square convergence to the point in the fixed point set closest to some given point. The convergence rates under certain additional conditions could also be described.
Furthermore, numerical experiments demonstrated the algorithm's superior empirical performance.

The present study assumed that the variance of a stochastic mapping was bounded (Assumption (A2)), but previous reports \cite{pmlr-v97-simsekli19a,pmlr-v139-garg21b,pmlr-v238-battash24a,ahn2024linear} have shown that stochastic noise in practical machine learning may include heavy-tailed behavior, which violates this assumption. Therefore, it is also necessary to verify whether the mini-batch stochastic Halpern algorithm converges in the presence of heavy-tailed behavior.

\section*{Declarations}
\subsection*{Ethics approval and consent to participate}
This study does not involve human participants or animals; therefore, ethics approval and consent to participate were not required.

\subsection*{Consent for publication}
This study does not contain any individual person's data; therefore, consent for publication is not applicable.

\subsection*{Availability of data and materials}
No datasets were generated or analyzed during the current study.

\subsection*{Competing interests}
The author declares that there are no competing interests.

\subsection*{Funding}
This work was supported by a JSPS KAKENHI Grant, Number 24K14846. 

\subsection*{Author's contributions}
The author conceived the study, developed the methodology, performed the analysis, and wrote the manuscript.

\subsection*{Acknowledgments}
I am sincerely grateful to the Editor, Reza Saadati, and the two anonymous reviewers for helping me improve the original manuscript. 
I also thank Hikaru Umeda for his input on the numerical examples.




\end{document}